\documentclass[11pt]{article}

\usepackage[margin=1.15in]{geometry}
\usepackage{amsmath,amssymb,amsthm,mathtools}
\usepackage{enumitem}
\usepackage{microtype}
\usepackage[hidelinks]{hyperref}

\newtheorem{theorem}{Theorem}[section]
\newtheorem{proposition}[theorem]{Proposition}
\newtheorem{remark}[theorem]{Remark}
\newtheorem{corollary}[theorem]{Corollary}
\newtheorem{lemma}[theorem]{Lemma}

\newcommand{\M}{\Sigma}
\newcommand{\R}{\mathbb{R}}
\newcommand{\E}{\mathbb{E}}

\newcommand{\trace}{\operatorname{tr}}
\newcommand{\tr}{\operatorname{tr}}
\newcommand{\Cov}{\operatorname{Cov}}
\newcommand{\Var}{\operatorname{Var}}

\newcommand{\covariance}{\operatorname{Cov}}
\newcommand{\Id}{\operatorname{Id}}
\newcommand{\diag}{\operatorname{diag}}

\newcommand{\abs}[1]{\left\vert#1\right\vert}

\newcommand{\brac}[1]{\left(#1\right)}
\newcommand{\scalar}[1]{\left \langle #1 \right \rangle}

\newcommand{\eps}{\varepsilon}

\numberwithin{equation}{section}

\begin{document}

\title{Thin-Shell implies small-ball deviation via Gaussian tilts}
\date{}

\author{Silouanos Brazitikos\textsuperscript{1} and Emanuel Milman\textsuperscript{2}}

\footnotetext[1]{Department of Mathematics and Applied Mathematics, University of Crete, 70013 Heraklion, Crete, Greece. Email: silouanb@uoc.gr.} 
\footnotetext[2]{Department of Mathematics, Technion -- Israel Institute of Technology, Haifa, Israel. Email: emilman@tx.technion.ac.il. \\
The research leading to these results is part of a project that has received funding from the European Research Council (ERC) under the European Union's Horizon 2020 research and innovation programme (grant agreement No 101001677).}

\begingroup    \renewcommand{\thefootnote}{}        \endgroup

\maketitle

\begin{abstract}
We show that uniform thin-shell estimates for isotropic log-concave measures $\mu$ on $\R^n$ yield precise and explicit deviation estimates for the Euclidean norm $|X|$ below the expectation, improving the square-root dependence of Klartag--Lehec to a quadratic one (which is best possible, up to numeric constants). 
Using the recent Chen--Klartag sharp variance bound, we deduce:
\[
 \mu\left(|X|\le \sqrt{n} - s \right ) \le \exp \left \{  - \frac{s^2}{4} \left (1 + O\brac{\frac{s}{\sqrt{n}}} \right ) \right \} \;\;\; \forall s \in (0,\sqrt{n}) . 
\]
In particular, this yields a new and transparent proof that Thin-Shell implies Slicing (by passing through small-ball estimates). Our method is based on using central Gaussian tilts, recently introduced by Brazitikos, which may be thought of as a deterministic version of Eldan's stochastic localization. An anisotropic variant (when $\mu$ has general covariance structure) of these deviation estimates is also obtained. 
\end{abstract}

\section{Introduction}

Let $\mu$ be a log-concave probability measure on $\R^n$ having density $f$ (which can be assumed continuous on its support), and let $X\sim\mu$.  Define:
\[
\Cov(\mu) = \E (X - \E X)\otimes (X - \E X) . 
\]
The measure $\mu$ is called isotropic if:
\[
    \E X=0,\qquad \Cov(\mu)=\Id . 
\]
The isotropic constant of $\mu$ is defined as the following affine-invariant parameter:
\[
L_\mu := \|f\|_\infty^{1/n} \det\Cov(\mu)^{1/(2n)} . 
\]

\smallskip
Define
\[
L_n := \sup  L_\mu ~,~ \bar \sigma_n^2 :=   \sup \frac{1}{n} \Var_{X \sim \mu}(|X|^2) ,
\]
where in both cases the supremum is over all isotropic log-concave measures $\mu$ on $\R^n$. Sometimes a different parameter $\sigma^2_n := \sup \E (|X| - \sqrt{n})^2$ is used in the literature, but it is well-known that it is equivalent to $\bar \sigma_n^2$ up to universal constants (e.g.~\cite[Lemma 1.4]{EldanKlartag}). 

\smallskip
Bourgain's Slicing problem \cite{Bourgain1986,MilmanPajor1989} asks whether $L_n \leq C$ uniformly for all $n \geq 1$; the Thin-Shell conjecture \cite{BobkovKoldobsky,AnttilaBallPerissinaki2003} asks whether $\bar \sigma_n \leq C$ uniformly for all $n \geq 1$. Here and elsewhere, $c ,C, C', C'', \ldots$ denote universal constants, independent of dimension, whose value may change from one occurrence to the next. 
Recently, both the Slicing problem and Thin-Shell conjecture were fully resolved by Klartag and Lehec in two groundbreaking works \cite{KlartagLehec-Slicing,KlartagLehec-ThinShell}, both relying on Eldan's method of stochastic localization \cite{Eldan2013}; we refer to these works for further context and history of these longstanding problems. 

\smallskip
It is easy to check that the Slicing problem is equivalent to a small-ball deviation estimate of the following form (see Proposition \ref{prop:equiv} below):
\begin{equation} \label{eq:intro-SB}
\mu \left ( |X| \le \varepsilon \sqrt{n} \right ) \le (C \varepsilon)^n \;\;\; \forall \varepsilon \in (0,1) .
\end{equation}
In fact, it was realized by Dafnis and Paouris \cite{DafnisPaouris2010} that it is enough to replace the exponent $n$ by $c n$ on the right-hand side (see also Bizeul \cite[Section 3.2]{Bizeul2025} and Brazitikos \cite{Brazitikos2026} for simplified arguments). On the other hand, by using the reverse H\"older inequalities for moments of the quadratic polynomial $|X|^2 - n$ (see \cite[Corollary 1.3]{KlartagLehec-ThinShell}), Klartag--Lehec deduced from their universal thin-shell estimate the following (equivalent) deviation estimate:
\begin{equation} \label{eq:intro-KL}
\mu  \left ( \abs{|X| - \sqrt{n}} \ge s \right ) \le C \exp( - c \sqrt{s} )  \;\;\; \forall s > 0 . 
\end{equation}
The square-root dependence on $s$ is known to not be optimal, both in the large-deviation regime above the expectation (owing to Paouris' large-deviation estimates \cite{Paouris2006}), as well as in the small ball-regime below the expectation (as witnessed by setting $s = (1-\varepsilon) \sqrt{n}$ and comparing with (\ref{eq:intro-SB})). 

\smallskip
The above deviation estimates already suggest some connection between the Thin-Shell and Slicing problems. Indeed, by introducing a certain Riemannian metric on the space of exponential tilts of a given log-concave measure $\mu$, it was shown by Eldan and Klartag \cite{EldanKlartag} that a positive resolution of the Thin-Shell conjecture would positively resolve the Slicing problem, with the following precise linear dependence:
\begin{equation} \label{eq:intro-Lnsigman}
L_n \leq C \bar \sigma_n . 
\end{equation}

\subsection{Deterministic Proofs}

Very recently, two ``deterministic" proofs (not relying on stochastic methods) of the Slicing and Thin-Shell conjectures have been obtained. By revisiting the moment-map method, Chen and Klartag \cite{ChenKlartag2026} obtained the following remarkable sharp bound, verifying that the thin-shell parameter is maximized for products of exponential distributions:
\[
\bar \sigma_n^2 \leq 8 . 
\]
In addition, Brazitikos \cite{Brazitikos2026} has obtained a proof of the Slicing problem using a novel approach involving Gaussian tilts, relying only on standard tools in asymptotic geometric analysis, such as the Bourgain--Milman reverse Blaschke--Santal\'o inequality, reverse H\"older inequalities between moments of linear functions, the Brascamp--Lieb variance estimate for strongly log-concave measures, etc. \cite{AsymptoticGeometricAnalysis-Book-I}.

\subsection{Our results}

The starting point of our work was the observation that the Gaussian tilt method from \cite{Brazitikos2026} (implemented there just for even measures) can be modified to give a new proof that Thin-Shell implies Slicing. As the above discussion suggests, instead of working with the isotropic constant $L_\mu$ directly, our aim will be to derive a single-radius small-ball estimate, which 
allows us a very direct comparison with the Thin-Shell postulate. 
Consequently, we show that the Chen-Klartag explicit sharp bound on the thin-shell parameter translates into the following \textbf{explicit} small-ball estimate. Note the absence of any unspecified universal constants in its formulation. 

\begin{theorem} \label{thm:intro-main}
Let $\mu$ be an isotropic log-concave probability measure on $\R^n$, and let $X$ be a random vector distributed according to $\mu$. Then for all $\eps\in(0,1)$,
\begin{equation}
\label{eq:intro-lower-tail}
    \mu\left(|X|\le \eps \sqrt n\right)
    \le
    \exp\left\{
        -\frac{n}{8}
            \brac{1-\eps^2
            +\eps^2\log \eps^2}
    \right\} , \\
\end{equation}
or equivalently, setting $s = (1-\eps) \sqrt{n}$, for all $s \in (0 , \sqrt{n})$,
\begin{equation} \label{eq:intro-asymptotics}
    \mu\left(|X|\le \sqrt{n} - s \right ) \le \exp \left \{  - \frac{s^2}{4} \left (1 + O\brac{\frac{s}{\sqrt{n}}} \right ) \right \} . 
\end{equation}
\end{theorem}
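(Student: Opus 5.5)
The plan is to use the central Gaussian tilts
\[
d\mu_t(x) = \frac{e^{-t|x|^2/2}}{Z(t)}\, d\mu(x), \qquad Z(t) := \E\, e^{-t|X|^2/2}, \qquad t \ge 0,
\]
together with an exponential Chebyshev (Chernoff) bound. On $\{|X|\le \eps\sqrt n\}$ we have $e^{-t|X|^2/2}\ge e^{-t\eps^2 n/2}$. Hence
\[
\log \mu(|X|\le \eps\sqrt n) \le \frac{t\eps^2 n}{2} + \log Z(t) \qquad \forall t\ge 0 .
\]
The task therefore reduces to an explicit upper bound on $\log Z(t)$, followed by optimization in $t$. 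We have $\frac{d}{dt}\log Z(t) = -\frac12 m(t)$, where $m(t) := \E_{\mu_t}|X|^2$, and $m(0)=n$ by isotropicity. So it suffices to bound $m(t)$ from below.

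Differentiating once more gives $m'(t) = -\frac12 \Var_{\mu_t}(|X|^2)$. The key step is to control this variance using the thin-shell bound applied to the tilted measure $\mu_t$. The measure $\mu_t$ is log-concave, and in fact $t$-uniformly log-concave, but it is neither isotropic nor centered. I would therefore first establish an anisotropic form of the Chen--Klartag bound, proved by approximating $\mu_t$ by an affine image of an isotropic measure (or by splitting $|X|^2$ around the barycenter $b_t$). The target is an inequality of the form
\[
\Var_{\mu_t}(|X|^2) \le \bar\sigma_n^2\, C\, \|\Cov(\mu_t)\|_{op}\, \tr \Cov(\mu_t) + (\text{terms involving } b_t) .
\]
The barycenter terms should be harmless, since for a central tilt of an isotropic measure $b_t$ stays small; one could alternatively track them through $m(t) = \tr\Cov(\mu_t) + |b_t|^2$. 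Combining this with $\Cov(\mu_t)\le \min(1,1/t)\,\Id$ (Brascamp--Lieb, plus the fact that Gaussian tilting decreases covariance) and with $\tr \Cov(\mu_t)\le m(t)$ should yield a closed differential inequality. Its expected form is $m'(t) \ge -\frac{4}{n} m(t)^2$, or something comparable of the form $m' \ge -c\,m/(1+t)$. Integrating gives $m(t)\ge n/(1+4t)$, or an analogous explicit lower envelope. This step is the one I expect to be the main obstacle: the constant $8$ from Chen--Klartag must propagate exactly, including the anisotropic and non-centered corrections, to produce the explicit $\frac n8$ with no hidden universal constants.

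Granting the envelope, we get $\log Z(t)\le -\frac12\int_0^t m$. The final step is to plug this into the Chernoff bound and choose $t$ to make the derivative vanish, i.e. to satisfy $m(t)=\eps^2 n$ for the envelope. With $m=n/(1+4t)$ this choice is $1+4t = \eps^{-2}$. The resulting exponent should be compared with $\frac n8(1-\eps^2+\eps^2\log\eps^2)$. If the crude envelope yields a slightly different elementary function, I would refine the differential inequality, for example by using the exact relation between $m(t)$ and $\tr\Cov(\mu_t)$, until the optimized exponent matches the stated function. Finally, \eqref{eq:intro-asymptotics} follows by Taylor expansion. Write $\eps = 1-s/\sqrt n$ and $\delta := 1-\eps^2 = \frac{2s}{\sqrt n}-\frac{s^2}{n}$. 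Then $1-\eps^2+\eps^2\log\eps^2 = \frac{\delta^2}{2}(1+O(\delta))$, and since $\delta^2 = \frac{4s^2}{n}(1+O(s/\sqrt n))$, this gives $\frac{n}{8}\cdot\frac{4s^2}{2n} = \frac{s^2}{4}$ at leading order, with relative error $O(s/\sqrt n)$.
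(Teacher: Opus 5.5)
Your reduction is exactly the paper's and is fine: the Chernoff bound, $(\log Z)'=-\frac12 m$, the optimization in $t$, and the final Taylor expansion. The gap is the step you flag yourself. You never obtain a lower envelope for $m(t)$ with the sharp constant, and the route you sketch does not lead to one.

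First, $b_t$ is not small for non-symmetric $\mu$. Take the Chen--Klartag extremizer (centered exponentials) at $t=1$. Each coordinate of $X_t$ is then distributed as $|G|-1$ with $G$ standard Gaussian, so $b_t=(\sqrt{2/\pi}-1)(1,\dots,1)$ and $|b_t|\asymp\sqrt n$. The correction $\Var|X_t|^2-\Var|X_t-b_t|^2=4\langle A_tb_t,b_t\rangle+4\Cov(|X_t-b_t|^2,\langle X_t-b_t,b_t\rangle)$ is therefore of order $n$, comparable to the main term. Your plan has no way to control it without losing the constant.

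Second, even when $b_t=0$, the quadratic-form version of thin-shell only gives $-m'(t)\le 4\tr(A_t^2)$. Since $\tr(A_t^2)\ge(\tr A_t)^2/n$, your target $m'\ge-\frac4n m^2$ goes in the wrong direction. What you would actually need is $\tr(A_t^2)\le m(t)$, essentially $A_t\preceq\Id$, which you assert but do not justify. Brascamp--Lieb only gives $A_t\preceq\frac1t\Id$, which is useless near $t=0$, exactly where the sharp constant is decided. Even granting $A_t\preceq\Id$ for even $\mu$, your route gives $m'\ge-4m$, i.e. the envelope $ne^{-4t}$, not $n/(1+4t)$. The latter would produce the different exponent $\frac n8(\log\eps^{-2}-1+\eps^2)$ rather than the stated one.

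The missing idea is to avoid bounding $\Var|X_t|^2$ altogether and work with $\Phi(t)=\log\det A_t$.
\begin{itemize}
\item Its derivative is $\Phi'(t)=-\frac12\Cov(|Y_t|^2,|X_t|^2)$, where $Y_t=A_t^{-1/2}(X_t-b_t)$ is isotropic and log-concave.
\item Cauchy--Schwarz gives $|\Phi'|\le\sqrt{Dn(-m')}$. So thin-shell (with $2D=8$) is only ever applied to $Y_t$, while $\Var|X_t|^2=-2m'$ is simply integrated in time: $-\Phi(t)\le\sqrt{Dnt\,(n-m(t))}$.
\item Combined with $-\Phi\ge n-\tr A_t\ge n-m$, this bootstraps to $n-m\le Dnt$, hence $-\Phi(t)\le Dnt$.
\item AM--GM then gives $m(t)\ge\tr A_t\ge n(\det A_t)^{1/n}\ge ne^{-4t}$.
\end{itemize}
The barycenter is absorbed for free by $\tr B_t\ge\tr A_t$, and you need neither an operator-norm bound nor an anisotropic thin-shell estimate. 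Integrating gives $\log Z(t)\le-\frac n8(1-e^{-4t})$, and choosing $e^{-4t}=\eps^2$ yields exactly the stated exponent.
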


This deviation estimate captures the thin-shell variance without any loss, improves the Klartag--Lehec deviation estimate (\ref{eq:intro-KL}) below the expectation from square-root to quadratic in $s$, 
and for each $\eta > 0$ is best possible (up to numeric constants in the exponent depending on $\eta$) in the range $\eps \in [\eta , 1]$, as may be verified for the standard Gaussian probability measure. Moreover, since the Chen--Klartag thin-shell estimate $\frac{1}{n} \Var |X|^2 \leq 8$ is sharp for $X = (X_i)_{i=1,\ldots,n}$ where $X_i = E_i -1$ and $E_i$ are i.i.d.~standard exponential random variables \cite{ChenKlartag2026}, it is not hard to check that the constant $\frac{1}{4}$ in (\ref{eq:intro-asymptotics}) is best-possible when $s = o(\sqrt{n})$. 
Applying (\ref{eq:intro-lower-tail}) for $\eps = 1/2$, we obtain a precise small-ball estimate, from which Slicing is known to follow. In particular, this gives a new and transparent proof that Thin-Shell implies Slicing (by passing through small-ball estimates), recovering (\ref{eq:intro-Lnsigman}). 

\medskip

More generally, in the anisotropic case, we obtain:

\begin{theorem} \label{thm:intro-main2}
Let $\mu$ be a log-concave probability measure on $\R^n$, let $X$ be a random vector distributed according to $\mu$, and denote $\M = \Cov(\mu)$. Then for every $\eps\in(0,1)$,
\[  \mu \!\left(
 |X - \E X|\le \varepsilon\sqrt{\trace \M}
 \right)
 \le
 \exp\!\left\{
 -\frac{(\trace \M)^2}{8 \,\trace(\M^2)}
 \left(1-\varepsilon^2+\varepsilon^2\log\varepsilon^2\right)
 \right\}.
\] \end{theorem}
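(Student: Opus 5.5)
We would reduce to $\E X=0$ by translation and study the \emph{central Gaussian tilts} of $\mu$, in the spirit of \cite{Brazitikos2026}. For $t\ge 0$ set
\[
F(t):=\log\int e^{-t|x|^2/2}\,d\mu(x), \qquad \mu_t:=\frac{e^{-t|x|^2/2}\,\mu}{e^{F(t)}} , \qquad g(t):=\E_{\mu_t}|X|^2 .
\]
Each $\mu_t$ is log-concave, $F(0)=0$, and
\[
F'(t)=-\tfrac12 g(t), \qquad g'(t)=-\tfrac12\Var_{\mu_t}(|X|^2)\le 0 .
\]
The small-ball event is controlled by an exponential Chebyshev bound. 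Since $e^{-t|x|^2/2}\ge e^{-tr^2/2}$ on $\{|x|\le r\}$,
\[
\mu(|X|\le r)\le \exp\big(tr^2/2+F(t)\big)\qquad\forall t\ge0 .
\]
So everything reduces to an upper bound on $F$, that is, a lower bound on $g$.

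The bound on $g$ should come from a differential inequality. Write $G=\trace\M=g(0)$ and $c:=4\,\trace(\M^2)/\trace\M$. The key claim is
\begin{equation}\label{eq:plan-key}
\Var_{\mu_t}(|X|^2)\;\le\; 2c\, g(t)\qquad \forall t\ge 0 ,
\end{equation}
which is equivalent to $g'\ge -c\,g$. Granting this, integration gives $g(t)\ge G e^{-ct}$, hence
\[
F(t)\le -\frac{G}{2c}\big(1-e^{-ct}\big).
\]
Plugging this into the Chebyshev bound with $r^2=\eps^2 G$ and choosing $t$ so that $e^{-ct}=\eps^2$, i.e.\ $ct=-\log\eps^2$, gives
\[
\mu(|X|\le\eps\sqrt{G})\le\exp\Big\{-\frac{G}{2c}\big(1-\eps^2+\eps^2\log\eps^2\big)\Big\}.
\]
Since $\frac{G}{2c}=\frac{(\trace\M)^2}{8\,\trace(\M^2)}$, this is exactly the exponent in the statement. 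In the isotropic case $c=4$, and the same computation recovers Theorem~\ref{thm:intro-main}.

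The main obstacle is \eqref{eq:plan-key}. The input is the Chen--Klartag bound in anisotropic form, $\Var(|Y|^2)\le 8\,\trace(\Cov(Y)^2)$ for centered log-concave $Y$. It cannot be applied to $\mu_t$ directly, for two reasons.
\begin{enumerate}[label=(\roman*)]
\item The tilt $\mu_t$ is not centered, so the variance of $|X|^2=|Y+m_t|^2$ acquires the cross term $4\scalar{\Cov(\mu_t)m_t,m_t}$.
\item One still needs to compare $\trace(\Cov(\mu_t)^2)$ with $\frac{\trace(\M^2)}{\trace\M}\,g(t)$.
\end{enumerate}
For (i), we would replace the plain tilt by the \emph{central} tilt $e^{-t|x-y_t|^2/2}$, with $y_t$ chosen so that the barycenter stays at the origin. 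This is the choice that makes the scheme a deterministic analogue of stochastic localization. The computation above should go through with $F$ defined by the centered family, with $y_t$ appearing only through an envelope/first-order condition.

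For (ii), our first attempt would be to show that the covariance is monotone along the central tilts, $\Cov(\mu_t)\preceq\M$, using Brascamp--Lieb for the strongly log-concave $\mu_t$ together with the centering. We would then try to deduce $\trace(\Cov(\mu_t)^2)\le\frac{\trace(\M^2)}{\trace\M}\trace\Cov(\mu_t)$. If only the cruder bound $\|\Cov(\mu_t)\|_{op}\le\|\M\|_{op}$ is available, the argument still gives the isotropic Theorem~\ref{thm:intro-main} in full. In that case the sharp anisotropic ratio would need a weighted version of the tilt, using the quadratic form adapted to $\M$ in place of $|x|^2$.
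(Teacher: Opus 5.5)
Your endgame is correct and matches the paper's. Your tilt $e^{-t|x|^2/2}$ of the centered measure is exactly the paper's anisotropic tilt $e^{-\frac t2\scalar{\M x,x}}$ of the isotropic image $\M^{-1/2}X$, and your $g$ is the paper's $m(t)=\tr(\M B_t)$. Once $g(t)\ge Ge^{-ct}$ is known with $c=4\tr(\M^2)/\tr\M$, the Chebyshev bound and the choice $e^{-ct}=\eps^2$ give the stated exponent.

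\textbf{The gap: the pointwise inequality is not established.} You reduce $g(t)\ge Ge^{-ct}$ to the pointwise inequality $\Var_{\mu_t}(|X|^2)\le 2c\,g(t)$ for all $t$. This is strictly stronger than what is needed, and neither of your fixes proves it.

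\textbf{Fix (i) fails.} Suppose $y_t$ is chosen so that $e^{-t|x-y_t|^2/2}\mu$ has barycenter $0$. On $\{|x|\le r\}$ the weight is then only bounded below by $e^{-t(r+|y_t|)^2/2}$, so the Chebyshev step degrades. There is also no envelope cancellation. Indeed $\nabla_y\log\int e^{-t|x-y|^2/2}\,d\mu(x)=t\,(\text{barycenter}-y)$, which here equals $-ty_t\neq0$. Hence $F'(t)=-\frac12\E|X_t-y_t|^2-t\scalar{y_t,y_t'}$.

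\textbf{Fix (ii) fails.} Brascamp--Lieb for $\mu_t$ only gives $\Cov(\mu_t)\preceq t^{-1}\Id$ (e.g.\ for $\mu$ uniform on a convex body), which is useless for small $t$. The natural route to $\Cov(\mu_t)\preceq\M$ is to show $-\frac12\Cov_{\mu_t}(\scalar{X-b_t,\theta}^2,|X|^2)\le 0$ along the flow. You give no argument for this sign, and it is not a standard property of non-symmetric log-concave measures. Even granting $\Cov(\mu_t)\preceq\M$, your trace inequality does not follow. For $\M=\diag(1,\delta)$ and $C=\diag(1,0)\preceq\M$ one has $\tr(C^2)=1>\frac{1+\delta^2}{1+\delta}=\frac{\tr(\M^2)}{\tr\M}\tr C$. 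Your isotropic fallback still depends on (i), so it does not recover Theorem~\ref{thm:intro-main} either.

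\textbf{The missing idea.} The paper never bounds $\Var(\scalar{\M X_t,X_t})=-2m'(t)$ from above. It uses this quantity only through its integral $S_1-m(t)$, where $S_1=\tr\M$ and $S_2=\tr(\M^2)$. The extra ingredient is the potential $\Phi(t)=\tr(\M\log A_t)$, with $A_t=\Cov(\mu_t)$ in isotropic coordinates.

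By Lemma~\ref{lem:log} and Corollary~\ref{cor:log}, $\Phi'(t)=-\frac12\Cov(\scalar{M_tY_t,Y_t},\scalar{\M X_t,X_t})$. Here $Y_t=A_t^{-1/2}(X_t-b_t)$ is centered and isotropic, and in an eigenbasis of $A_t$ one has $|(M_t)_{ij}|=|\M_{ij}|\,u_{ij}/\sinh u_{ij}\le|\M_{ij}|$. Thus $\tr(M_t^2)\le S_2$ without any comparison between $A_t$ and $\M$.

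The Chen--Klartag bound in Letwin's form \eqref{eq:ani-var} is therefore applied only to the centered isotropic $Y_t$, which removes both (i) and (ii). Cauchy--Schwarz then gives $|\Phi'|\le\sqrt{DS_2(-m')}$. Combine this with $-\Phi\ge S_1-m$ (from $-\log A_t\succeq\Id-A_t\succeq\Id-B_t$) and Cauchy--Schwarz in time:
\[
S_1-m(t)\le-\Phi(t)\le\sqrt{DS_2\,t\,(S_1-m(t))}.
\]
Hence $-\Phi(t)\le DS_2t$. The weighted Jensen inequality $\log\frac{m(t)}{S_1}\ge\frac{\Phi(t)}{S_1}$ then yields $m(t)\ge S_1e^{-DS_2t/S_1}$ with $D=4$. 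This is exactly your $g(t)\ge Ge^{-ct}$, obtained without ever proving $g'\ge-cg$.
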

\noindent Setting $\E X = 0$ and $\M = \Id$ precisely recovers Theorem \ref{thm:intro-main}. Note that $\E |X - \E X|^2 = \trace \M$. 

\subsection{Proof Idea}

The idea in the isotropic case is as follows. For $t\ge 0$ define the Gaussian-tilted measure $\mu_t$ and corresponding partition function $Z(t)$,
\[
    d\mu_t(x) :=\frac{1}{Z(t)}e^{-\frac12t|x|^2}\,d\mu(x),
    \qquad
    Z(t) :=\int_{\R^n}e^{-\frac12t|x|^2}\,d\mu(x),
\]
Let $X_t$ be distributed according to $\mu_t$, and set
\[
    B_t := \E X_t \otimes X_t \qquad A_t :=\Cov(\mu_t),
    \qquad
    g(t) :=\frac1n\log Z(t).
\]
Note that $A_t = B_t$ when $\mu$ (and hence $\mu_t$) is even, but $A_t = B_t - \E X_t \otimes \E X_t  \preceq B_t$ in general. 
We will use these notations throughout this work. 

We show in Section \ref{sec:2} that the Slicing problem is equivalent to having either (equivalently, both) of the following lower-bounds at some fixed time:
\[
\exists t_0 > 0 \;\;\; \frac{1}{n} \tr B_{t_0} \geq \exp(-C t_0)  \;\; \text{ or } \;\; \frac{1}{n} \log \det A_{t_0} \geq -C t_0 . 
\]
On the other hand, we show in Section \ref{sec:3} that the Thin-Shell conjecture, $\bar \sigma_n^2 \leq 2 D$, is equivalent to having the same control for all times:
\[
\forall t > 0 \;\;\; \frac{1}{n} \tr B_{t} \geq \exp(-D t)   \;\; \text{ or } \;\; \frac{1}{n} \log \det A_{t} \geq -D t  . 
\]
Consequently, Thin-Shell trivially implies Slicing. Since $(\log Z)'(t) =-\frac{1}{2} \tr(B_t)$, integrating the trace lower-bound yields an exponential upper-bound on the partition function $Z(t)$, which translates via Markov's inequality to a precise small-ball estimate -- see Section \ref{sec:4}. In the anisotropic case, to obtain small-ball estimates when the Euclidean norm does not coincide with the covariance structure of $\mu$, one uses anisotropic Gaussian tilts, adapting the argument sketched above -- see Section \ref{sec:5}. Remarkably, the derivation of the small-ball estimate is entirely self-contained, and in fact does not rely on log-concavity at all (see Remark \ref{rem:no-LC}).

\bigskip
\noindent \textbf{AI declaration:} All ideas and results in this work were obtained by the authors. Chat-GPT-5.6-Sol was used to write a draft of various parts of the paper, which was checked and polished by the authors, as well as to prune the paper for typos and consistency.

\section{Equivalent formulations of Slicing using Gaussian tilts}  \label{sec:2}

We proceed with the notation introduced above. 
Consider the following 6 quantitative properties of $\mu$: 

\begin{enumerate}[label=\textup{(\arabic*)},leftmargin=2.6em]
    \item\label{it:1} \textbf{Slicing Bound.}  For some $C_L > 0$,
    \[
        L_\mu\le C_L.
    \]

    \item\label{it:2} \textbf{Small ball for all radii.}  For some $\varepsilon_0 , C_{\rm sb}>0$, 
     \[
        \mu\bigl(|X|\le \varepsilon\sqrt n\bigr)
        \le (C_{\rm sb}\varepsilon)^n \;\;\; \forall \varepsilon \in (0,\varepsilon_0) .
    \]
 
\item\label{it:3} \textbf{Small ball at one fixed radius.}  For some $\varepsilon_*\in(0,1)$ and $C_{\rm isb}>0$ with
   $C_{\rm isb}\varepsilon_*<1$, 
    \[
        \mu\bigl(|X|\le \varepsilon_*\sqrt n\bigr)
        \le (C_{\rm isb}\varepsilon_*)^n .    \]

 \item\label{it:4} \textbf{Partition function upper bound.}  For some $c_{Z},t_0>0$,
    \[
        g(t_0)=\frac1n\log Z(t_0)\le -c_{Z} t_0 .     \]

    \item\label{it:5} \textbf{Second-moment trace lower bound.}  For some $c_T,t_1>0$,
    \[
        \frac1n \tr(B_{t_1})\ge c_T  . 
    \]

    \item\label{it:6} \textbf{Covariance determinant lower bound.}  For some $C_D,t_2>0$,
    \[
        \frac1n\log\det A_{t_2} \ge -C_D . 
    \]
\end{enumerate}

\begin{proposition}\label{prop:equiv}
Fix an isotropic log-concave measure $\mu$ on $\R^n$. Then:
\begin{align*}
\ref{it:1} & \Leftrightarrow \ref{it:2} \\
& \Downarrow \\
& \ref{it:6} \\
& \Downarrow \\
\ref{it:3} \Leftrightarrow & \ref{it:4} \Leftrightarrow \ref{it:5} 
\end{align*}
If any one of these statements holds uniformly for all isotropic log-concave measures on $\R^n$, then so do all of the other statements. In all of these implications, the parameters $C_L, C_{\rm sb},C_{\rm isb}, \varepsilon_0, \varepsilon_*, c_{Z}, c_T,C_D,t_0,t_1,t_2$ depend only on each other and not on $\mu$ nor on $n$. 
\end{proposition}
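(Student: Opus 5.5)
The plan is to prove the chain of implications one arrow at a time, keeping track of how constants propagate. Two facts drive everything. First, $(\log Z)'(t) = -\frac12\tr B_t$. Second, $t\mapsto \tr B_t$ is non-increasing, since $\frac{d}{dt}\tr B_t = -\frac12\Var_{\mu_t}(|X|^2)\le 0$. Hence $\log Z$ is convex and decreasing, with $g(0)=0$ and $g'(0) = -\frac12$ (isotropy).

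\textbf{\ref{it:1}$\Leftrightarrow$\ref{it:2}.} For \ref{it:1}$\Rightarrow$\ref{it:2}, bound $\mu(|X|\le \eps\sqrt n)\le \|f\|_\infty \operatorname{vol}(\eps\sqrt n B_2^n)\le L_\mu^n (C\eps)^n$, using $\operatorname{vol}(\sqrt n B_2^n)^{1/n}\le C$. For the converse, use the standard fact that an isotropic log-concave density satisfies $f(x)\ge e^{-Cn}\|f\|_\infty$ on a ball of radius $c\sqrt n$ around the barycenter (\cite{AsymptoticGeometricAnalysis-Book-I}). Then $\mu(|X|\le \eps\sqrt n)\ge e^{-Cn}L_\mu^n (c\eps)^n$, and comparing with \ref{it:2} at a fixed small $\eps$ gives $L_\mu\le C'C_{\rm sb}$.

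\textbf{\ref{it:2}$\Rightarrow$\ref{it:6}.} Write $\log\det A_t = -2n\log L_{\mu_t} + 2\log\|f_t\|_\infty$, where $f_t = e^{-t|x|^2/2}f/Z(t)$. This gives $\|f_t\|_\infty\ge \|f_t\|_\infty e^{\ldots}$ type lower bounds via $\|f_t\|_\infty \ge f(0)e^{0}/Z(t)\ge f(0)$, because $Z\le 1$. Since $f(0)\ge e^{-n}\|f\|_\infty$ (Fradelizi) and $L_{\mu_t}\le L_n$, this route uses a uniform slicing bound. For the single-measure version, I would instead bound $\det A_t$ directly. Since $\mu_t$ is log-concave, $\det A_t^{1/2}\ge c\,\|f_t\|_\infty^{-1/n}$. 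Then $\|f_t\|_\infty\le \|f\|_\infty/Z(t)$, with $\|f\|_\infty = L_\mu^n\le C_L^n$, and $Z(t)\ge \mu(|X|\le\sqrt{2n})e^{-tn}\ge \frac12 e^{-tn}$ by Chebyshev. Taking $t_2=1$ yields $\frac1n\log\det A_{t_2}\ge -C(C_L)$.

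\textbf{\ref{it:6}$\Rightarrow$\ref{it:5}.} By AM--GM, $\frac1n\tr B_t\ge\frac1n\tr A_t\ge (\det A_t)^{1/n}\ge e^{-C_D}$.

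\textbf{\ref{it:5}$\Rightarrow$\ref{it:4}.} Monotonicity of $\tr B_t$ gives $g(t_1)=-\frac1{2n}\int_0^{t_1}\tr B_s\,ds\le -\frac{c_T}{2}t_1$.

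\textbf{\ref{it:4}$\Rightarrow$\ref{it:5}.} By convexity, $-c_Zt_0\ge g(t_0)\ge t_0 g'(t_0)$, so $\frac1n\tr B_{t_0}\ge 2c_Z$ at $t_1=t_0$.

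\textbf{\ref{it:4}$\Rightarrow$\ref{it:3}.} Markov's inequality gives $\mu(|X|\le\eps\sqrt n)\le e^{t_0\eps^2 n/2}Z(t_0)\le \exp(-n t_0(c_Z-\eps^2/2))$. Choose $\eps_*$ small so that $e^{-t_0(c_Z-\eps_*^2/2)}<1$, and set $C_{\rm isb}$ accordingly; this satisfies $C_{\rm isb}\eps_*<1$.

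\textbf{\ref{it:3}$\Rightarrow$\ref{it:4}.} Split $Z(t)\le \mu(|X|\le\eps_*\sqrt n)+e^{-t\eps_*^2n/2}\le (C_{\rm isb}\eps_*)^n + e^{-t\eps_*^2 n/2}$. Pick $t_0$ with $t_0\eps_*^2/2 = \log\frac1{C_{\rm isb}\eps_*}=:a>0$. This gives $Z(t_0)\le 2e^{-an}$. The factor $2$ is a nuisance for small $n$. To absorb it, either note that for $n\ge 2$ we have $2e^{-an}\le e^{-an/2}$ after adjusting when $a\ge 2\log 2/n$, or treat small $n$ by using $Z(t_0)\le 1-ct_0$ directly from $g'(0)=-\frac12$ and convexity bounds.

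\textbf{Uniform statement.} If one statement holds for all isotropic log-concave measures, the chain brings us to \ref{it:3}/\ref{it:4}/\ref{it:5}/\ref{it:6} uniformly. To return to \ref{it:1}, invoke the Dafnis--Paouris result cited in the introduction: a uniform one-radius small-ball bound with exponent $cn$ implies Slicing.

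\textbf{Main obstacle.} I expect the main obstacle to be \ref{it:2}$\Rightarrow$\ref{it:6} for a single measure, since it needs a lower bound on $\det\Cov$ of a log-concave measure in terms of its maximal density, plus careful constant tracking. The constant nuisance in \ref{it:3}$\Rightarrow$\ref{it:4} is secondary.
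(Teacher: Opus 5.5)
\textbf{Gap 1: the step \ref{it:4}$\Rightarrow$\ref{it:5} is wrong.} For convex $g$ with $g(0)=0$ you have $g(t_0)=\int_0^{t_0}g'(s)\,ds\le t_0\,g'(t_0)$, because $g'$ is non-decreasing. So the inequality $g(t_0)\ge t_0\,g'(t_0)$ you use is reversed. Your conclusion $\frac1n\tr B_{t_0}\ge 2c_Z$ is in fact false. For the standard Gaussian, $g(t)=-\frac12\log(1+t)$ and $\frac1n\tr B_t=\frac1{1+t}$. At $t_0=1$, property \ref{it:4} holds with $c_Z=\frac12\log 2$, yet $\frac1n\tr B_1=\frac12<\log 2$.

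The underlying issue is that \ref{it:4} only bounds from below the \emph{average} of the non-increasing function $s\mapsto\frac1n\tr B_s$ over $[0,t_0]$. That says nothing about its value at the endpoint. You must move to an earlier time and use the a priori bound $\frac1n\tr B_s\le 1$, so that the average cannot be carried entirely by the initial stretch. This is what the paper does:
\begin{itemize}
\item $g(t)\ge -t/2$, by Jensen, or equivalently $g'\ge g'(0)=-\frac12$, which you already have;
\item the convexity secant bound $g'(t)\le \frac{g(t_0)-g(t)}{t_0-t}\le\frac{-c_Zt_0+t/2}{t_0-t}$;
\item at $t_1=c_Zt_0$ this gives $\frac1n\tr B_{t_1}\ge\frac{c_Z}{1-c_Z}$, where $c_Z\le\frac12$, again by Jensen.
\end{itemize}

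\textbf{Gap 2: \ref{it:3}$\Rightarrow$\ref{it:4} does not close uniformly in $n$.} With $e^{-t_0\eps_*^2/2}=q:=C_{\rm isb}\eps_*$ you only get $Z(t_0)\le 2q^n$. This cannot be written as $e^{-c_Zt_0n}$ with $c_Z$ independent of $n$ when $n$ is below a $q$-dependent threshold; it is even $\ge 1$ whenever $n\le\log 2/\log(1/q)$. Your small-$n$ remedy, ``$Z(t_0)\le 1-ct_0$ from $g'(0)=-\frac12$ and convexity'', misuses convexity in the same way: tangent lines only give the lower bound $g(t)\ge -t/2$. The fix is to take $t_0$ larger, with $e^{-t_0\eps_*^2/2}=\frac{1-q}{2}$, and use $a^n+b^n\le(a+b)^n$. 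This gives $Z(t_0)\le\left(\frac{1+q}{2}\right)^n$ for every $n$.

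\textbf{Gap 3: the ``standard fact'' in \ref{it:2}$\Rightarrow$\ref{it:1} is false as stated.} The isotropic cube $[-\sqrt3,\sqrt3]^n$ has zero density at $c\sqrt n\,e_1$ once $n>3/c^2$. The true statement holds on a ball of absolute radius $c$. That forces you to apply \ref{it:2} at $\eps\lesssim 1/\sqrt n$, which is allowed. Simpler is the paper's route: Fradelizi's bound $\|f\|_\infty\le e^n f(0)$, followed by $\eps\downarrow 0$.

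\textbf{Minor points.}
\begin{itemize}
\item The entropy bound should read $(\det A_t)^{1/(2n)}\ge c\,\|f_t\|_\infty^{-1/n}$. With that correction, your argument for the step you flagged as the main obstacle is fine, and that step is in fact routine.
\item The uniform return from the single-radius statement \ref{it:3} to \ref{it:1} is the result of Brazitikos (Corollary~4.1) or Bizeul's argument. It is not the all-radii Dafnis--Paouris formulation you cite.
\end{itemize}
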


The equivalence between \ref{it:1} and \ref{it:2} for a fixed $\mu$ is elementary.  The nontrivial global connection with variants of \ref{it:3} 
was developed by Dafnis and Paouris \cite{DafnisPaouris2010}; see also Bizeul \cite{Bizeul2025} and Brazitikos \cite{Brazitikos2026}.  Here we add the Gaussian-tilt formulations \ref{it:4}--\ref{it:6}. The implication $\ref{it:5}\Rightarrow\ref{it:3}$ was established by 
Brazitikos in \cite[Corollary~4.1]{Brazitikos2026} using a different argument than the one below, involving projection onto subspaces; our simpler argument passes through formulation \ref{it:4}, revealing along the way that \ref{it:3}, \ref{it:4} and \ref{it:5} are in fact equivalent. 

It is useful to record the elementary uniform estimate
\begin{equation} \label{eq:ball}
   0 < c \leq \sqrt n\,|B_2^n|^{1/n} = \sqrt{2\pi e}\,(1+o(1)) \leq C . 
\end{equation}

\begin{proof}[Proof of Proposition \ref{prop:equiv}] 

\noindent\textbf{$\ref{it:1}\Rightarrow\ref{it:2}$.}
For every $\varepsilon>0$,
\[
    \mu\bigl(|X|\le\varepsilon\sqrt n\bigr)
   \le \|f\|_\infty\,\bigl|\varepsilon\sqrt n B_2^n\bigr| 
    =L_\mu^n\,\varepsilon^n n^{n/2}|B_2^n|.
\]
Recalling \eqref{eq:ball}, we see that $L_\mu\le C_L$ implies \ref{it:2} with $C_{\rm sb}=C C_L$. 

\medskip

\noindent\textbf{$\ref{it:2}\Rightarrow\ref{it:1}$.}
Since $f$ log-concave with barycenter at the origin we have by \cite[Theorem 4]{Fradelizi1997}
\[
    \|f\|_\infty \le e^n f(0) = e^n \lim_{\varepsilon\downarrow0} 
    \frac{\mu(\varepsilon\sqrt n B_2^n)}
         {|\varepsilon\sqrt n B_2^n|} .
\]
Using \ref{it:2}, dividing by $\varepsilon^n$, and letting $\varepsilon\downarrow0$, we obtain
\[
    \|f\|_\infty\,n^{n/2}|B_2^n|
    \le (e \, C_{\rm sb})^n.
\]
Since $\mu$ is isotropic, $L_\mu=\|f\|_\infty^{1/n}$, and therefore
\begin{equation}\label{eq:smallball-to-L}
    L_\mu
    \le \frac{e \, C_{\rm sb}}{\sqrt n\,|B_2^n|^{1/n}}
    \le \frac{e \, C_{\rm sb}}{c} ,
\end{equation}
where the last inequality follows from \eqref{eq:ball}. This establishes \ref{it:1}.

\medskip

Of course $\ref{it:2}\Rightarrow\ref{it:3}$ by taking $C_{\rm isb}:=C_{\rm sb}$ and fixing a sufficiently small $\varepsilon_*  \in (0, \min(1,\varepsilon_0))$ such that $C_{\rm isb}\varepsilon_*<1$.
We do not know how to prove the converse implication $\ref{it:3}\Rightarrow\ref{it:2}$ for a fixed $\mu$ -- see Remark \ref{rem:CR} below.  
However, it was shown by
Brazitikos \cite[Corollary~4.1 and the discussion following it]{Brazitikos2026}  that the uniform validity of \ref{it:3} for all isotropic log-concave $\mu$ on $\R^n$ implies back \ref{it:1} -- see Remark \ref{rem:explicit-dependence}. When $\varepsilon_* > 0$ is small-enough, a particularly short proof was also given by Bizeul \cite[Section~3.2]{Bizeul2025}. 
Consequently, if \ref{it:3} holds uniformly over 
all measures under consideration, then so does \ref{it:1}, and hence
\ref{it:2}.

\bigskip

Before proceeding with the proof of the other implications, we first record several elementary facts about the Gaussian tilt.  If $X_t \sim \mu_t$, differentiation under the integral sign gives
\begin{equation}\label{eq:gprime}
    g'(t)=-\frac{1}{2n}\E |X_t|^2
          =-\frac{1}{2n}\tr(B_t),
\end{equation}
and
\begin{equation}\label{eq:gsecond}
    g''(t)=\frac{1}{4n}\Var (|X_t|^2)> 0 .
\end{equation}
Thus $g$ is decreasing and convex, $g(0)=0$, and $\tr(B_t)$ is decreasing. Moreover, Jensen's inequality and isotropicity give
\begin{equation}\label{eq:jensen}
    g(t)=\frac1n\log\E  e^{-t|X|^2/2}
    \ge -\frac{t}{2n}\E |X|^2=-\frac t2.
\end{equation}

\medskip
\noindent\textbf{$\ref{it:1}\Rightarrow\ref{it:6}$.}
Let
\[
    f_t(x)=\frac1{Z(t)}e^{-t|x|^2/2}f(x)
\]
be the density of $\mu_t$.  Clearly \begin{equation}\label{eq:supft}
    \|f_t\|_\infty \le \frac{\|f\|_\infty}{Z(t)}.
\end{equation}
For any probability density $p$ on $\R^n$ with covariance matrix $\Sigma$, the Gaussian maximal-entropy inequality (see e.g.~\cite[Chapter~8]{CoverThomas}) gives
\[
    h(p) := -\int p(x) \log p(x) dx \le \frac12\log\bigl((2\pi e)^n\det\Sigma\bigr),
\]
whereas the elementary bound $p\le\|p\|_\infty$ gives
$h(p)\ge-\log\|p\|_\infty$.  Applying these inequalities to $f_t$ yields
\[
    (\det A_t)^{1/(2n)}
    \ge \frac{1}{\sqrt{2\pi e}}\,\|f_t\|_\infty^{-1/n}.
\]
Using \eqref{eq:supft}, isotropicity, and $L_\mu=\|f\|_\infty^{1/n}$,
\begin{equation}\label{eq:detentropy}
    (\det A_t)^{1/(2n)}
    \ge \frac{Z(t)^{1/n}}{\sqrt{2\pi e}\,L_\mu}.
\end{equation}
By \eqref{eq:jensen},
\[
    Z(t)^{1/n}\ge e^{-t/2},
\]
and so if $L_\mu\le C_L$,
\[
    \frac1n\log\det A_t
    \ge -\log(2\pi e)-2\log C_L-t.
\]
Setting $t = t_2$ for any fixed $t_2 > 0$ establishes \ref{it:6}.

\medskip
\noindent\textbf{$\ref{it:6}\Rightarrow\ref{it:5}$.}
By the arithmetic--geometric mean inequality applied to the eigenvalues of $A_{t_2}$,
\[
    \frac1n\tr(B_{t_2}) \ge \frac1n\tr(A_{t_2})
    \ge (\det A_{t_2})^{1/n}
    \ge e^{-C_D}.
\]
Thus \ref{it:5} holds with $t_1=t_2$ and $c_T=e^{-C_D}$.

\bigskip

\medskip
\noindent\textbf{$\ref{it:3}\Rightarrow\ref{it:4}$.}
Let $q = C_{\rm isb}\varepsilon_* <1$, so that
\[
    \mu\bigl(|X|\le\varepsilon_*\sqrt n\bigr)\le q^n.
\]
For every $t>0$, splitting the integral defining $Z(t)$ at the sphere of radius $\varepsilon_* \sqrt n$ gives
\begin{equation}\label{eq:splitZ}
    Z(t)
    \le \mu\bigl(|X|\le\varepsilon_*\sqrt n\bigr)
      +e^{-t\varepsilon_*^2 n/2} 
    \le q^n+e^{-t\varepsilon_*^2 n/2}.
\end{equation}
Choose
\begin{equation}\label{eq:Tchoice}
    t= t_0 := \frac{2}{\varepsilon_*^2}\log\frac{2}{1-q} >  0. 
\end{equation}
Then $e^{-t_0 \varepsilon_*^2/2}=\frac{1-q}{2}$, and therefore, using $a^n+b^n\le(a+b)^n$ for $a,b\ge0$,
\[
    Z(t_0)\le
    \left(q+\frac{1-q}{2}\right)^n
    =\left(\frac{1+q}{2}\right)^n.
\]
Set
\[
   c_{Z}:=\frac{1}{t_0} \log\frac{2}{1+q} >0.
\]
Then $g(t_0)= \frac{1}{n} \log Z(t_0) \le-c_{Z}t_0$, establishing \ref{it:4}.

\medskip
\noindent\textbf{$\ref{it:4}\Rightarrow\ref{it:5}$.}
Convexity of $g$ implies that $g(t)/t$ is non-decreasing, and hence \textbf{\ref{it:4}} is actually equivalent to the statement that
\[
g(t) \le \frac{t}{t_0} g(t_0) \leq -c_{Z} t  \;\;\; \forall t \in (0,t_0) . 
\]
On the other hand, by \eqref{eq:jensen},
\[
    -\frac t2\le g(t) ; 
\]
in particular 
\begin{equation} \label{eq:ciz}
c_{Z}\le 1/2.
\end{equation}
  If $0<t<s<t_0$, convexity of $g$ gives
\[
    g'(t)\le\frac{g(s)-g(t)}{s-t}
    \le\frac{-c_{Z}s+t/2}{s-t}.
\]
Letting $s\uparrow t_0$ yields
\begin{equation}\label{eq:derivbound}
    g'(t)\le
    \frac{-c_{Z}t_0+t/2}{t_0-t}.
\end{equation}
Using \eqref{eq:gprime},
\begin{equation}\label{eq:tracefromg}
    \frac1n\tr(B_t)
    =-2g'(t)
    \ge \frac{2c_{Z}t_0-t}{t_0-t}.
\end{equation}
For instance, for $0<t\le c_{Z}t_0$ the right-hand side is bounded below by
\[
    \frac{c_{Z}}{1-c_{Z}}>0.
\]
Thus \ref{it:5} holds with, say,
\[
    t_1=c_{Z}t_0,
    \qquad
    c_T=\frac{c_{Z}}{1-c_{Z}}.
\]

\medskip
\noindent\textbf{$\ref{it:5}\Rightarrow\ref{it:4}$.}
Assume that
\[
    \frac1n\tr(B_{t_1})\ge c_T .
\]
Recall that $g'(t) = -\frac{1}{2n}\tr(B_t)$ is increasing and that $g(0) = 0$. Therefore, 
\[
    g(t_1) =\int_0^{t_1}g'(t)\,dt =-\frac12\int_0^{t_1}\frac1n\tr(B_t)\,dt \le -\frac12\int_0^{t_1}\frac1n\tr(B_{t_1})\,dt \leq  -\frac{c_T}{2}t_1 .
\]
Thus \ref{it:4} holds with
\[
    t_0=t_1,
    \qquad
    c_{Z}=\frac{c_T}{2}.
\]

\medskip
\noindent\textbf{$\ref{it:4}\Rightarrow\ref{it:3}$.}
Assume that
\[
    Z(t_0) = e^{n g(t_0)} \le e^{-c_{Z}t_0n}.
\]
For every $\varepsilon>0$, on the event
$\{|X|\le\varepsilon\sqrt n\}$ we have
\[
    e^{-t_0|X|^2/2}
    \ge e^{-t_0\varepsilon^2n/2}.
\]
Consequently
\begin{equation}\label{eq:smallball-from-Z}
    \mu\bigl(|X|\le\varepsilon\sqrt n\bigr)
    \le e^{t_0\varepsilon^2n/2} Z(t_0) \leq \exp\left\{
        -t_0\left(c_{Z}-\frac{\varepsilon^2}{2}\right)n
    \right\}.
\end{equation}
Choose any 
\[
    \varepsilon_* \in (0, \sqrt{2 c_{Z}}) ;
\]
note that by (\ref{eq:ciz}), $\sqrt{2 c_{Z}} \leq 1$, and so $\varepsilon_* < 1$. 
Then
\[
    q:=
    \exp\left\{
        -t_0\left(c_{Z}-\frac{\varepsilon_*^2}{2}\right)
    \right\}
    <1,
\]
and so defining $C_{\rm isb}:=\frac{q}{\varepsilon_*}$, \eqref{eq:smallball-from-Z} gives
\[
    \mu\bigl(|X|\le\varepsilon_*\sqrt n\bigr)
    \le (C_{\rm isb}\varepsilon_*)^n.
\]
This proves \ref{it:3}.
\end{proof}

\begin{remark} \label{rem:CR}
Starting from the single-radius small-ball estimate \ref{it:3} for a fixed \textbf{even} log-concave $\mu$, it is possible to obtain the following small-ball estimate for all smaller radii:
     \[
        \mu\bigl(|X|\le \varepsilon\sqrt n\bigr)
        \le (C_{\rm sb}\varepsilon)^{\beta n }\;\;\; \forall \varepsilon \in (0,\varepsilon_*) ,
    \]
    with some $C_{\rm sb},\beta > 0$ depending on $C_{\rm isb},\varepsilon_*$. Indeed, it follows from \cite[Corollary 2]{CorderoRotem2023} that $\R \ni s\mapsto \mu(|X| \leq e^s)$ is log-concave, and so an upper bound on $\mu( |X| \leq \varepsilon \sqrt{n} )$ is immediately deduced from an upper bound for $\varepsilon = \varepsilon_*$ and a lower-bound for $\varepsilon = 2$ (obtained by a simple Markov estimate). 
            However, this gives the wrong exponent of $\beta n$ instead of $n$ as in \ref{it:2}. 
\end{remark}

\begin{remark} \label{rem:explicit-dependence}
For future reference, we note that if \ref{it:3} holds for all isotropic log-concave measures $\mu$ on $\R^n$, then \ref{it:1} holds with
\[
C_L \leq \frac{C}{\varepsilon_* \sqrt{1 - C_{\rm isb} \varepsilon_*}} ,
\]
for some universal $C > 0$. This follows by carefully inspecting the proof by Brazitikos in \cite{Brazitikos2026}.
\end{remark}

\section{Equivalent formulations of Thin-Shell using Gaussian tilts} \label{sec:3}

From here on log-concavity will not be used at all, we just need to assume that the quantities below are finite. So fix an isotropic probability measure $\mu$ on $\R^n$ with, say, finite fourth moments. 
For $t>0$, define
\[
    b_t := \E X_t \qquad m(t):=\tr(B_t) = \E |X_t|^2,
    \qquad
    \Phi(t):=\log\det A_t ,
\]
where recall $X_t\sim\mu_t$, $B_t = \E X_t \otimes X_t$ and $A_t = \Cov(\mu_t) = B_t - b_t \otimes b_t$. Differentiating the barycenter and covariance along the Gaussian tilt gives
\begin{align*}
b_t' &= -\frac12\,\Cov\!\left ( X_t , |X_t|^2 \right ) , \\
 B_t' &=  -\frac12\,\Cov\!\left(X_t \otimes X_t,|X_t|^2\right)  , \\
 A_t' & =-\frac12\,\Cov\!\left((X_t - b_t)\otimes (X_t-b_t),|X_t|^2\right)  .
\end{align*}
 Taking traces, we have
\begin{equation}\label{eq:mprime}
    m'(t)=\tr(B_t') = -\frac12\Var (|X_t|^2)\le0,
    \qquad m(0)=n ,
\end{equation}
and moreover,
\begin{equation}
    \Phi'(t)
    =\tr(A_t^{-1}A_t') =-\frac12\Cov \!\left(
        \langle A_t^{-1} (X_t - b_t), (X_t - b_t) \rangle,|X_t|^2
    \right).
    \label{eq:phiprime}
\end{equation}
Define the random vector
\[
    Y_t := A_t^{-1/2} (X_t - b_t) \sim \nu_t  ,
\]
so that $\nu_t$ is again an isotropic probability measure on $\R^n$. Note that if $\mu$ was log-concave then all $\nu_t$'s will remain log-concave as well. We see that
\begin{equation}\label{eq:isotropized-var}
    \Var \!\left(\langle A_t^{-1} (X_t-b_t),(X_t-b_t)\rangle\right)
    =\Var (|Y_t|^2).
\end{equation}

We are now ready to prove the following proposition. Its usefulness stems from the fact that $\nu_t$ remains isotropic for all $t > 0$. In some sense, this may be thought of as a deterministic version of Eldan's stochastic localization method \cite{Eldan2013}, in which $\mu$ is disintegrated into stochastically evolving non-centered Gaussian tilts, which are constantly put in isotropic position. In contrast, we just need to control the deterministic central Gaussian tilt. 

\begin{proposition}\label{prop:thin-shell-flow}
Let $\mu$ be a fixed isotropic probability measure on $\R^n$ with finite fourth moments, and let $D>0$ and $t_0>0$.  Then each of the following statements implies the one after it:
\begin{enumerate}[label=\textup{(\Alph*)},leftmargin=2.6em]
    \item\label{it:tsA}
    \[
       \frac{1}{n} \Var_{Y_t \sim \nu_t} (|Y_t|^2)\le 2D 
        \qquad \forall\,t\in(0,t_0).
    \]

    \item\label{it:tsB}
    \[
        \frac1n\log\det A_t\ge -Dt
        \qquad \forall\,t\in(0,t_0).
    \]

    \item\label{it:tsC}
    \[
        \frac1n\tr(B_t)\ge e^{-Dt}
        \qquad \forall\,t\in(0,t_0).
    \]

    \item\label{it:tsD}
    \[
       \frac{1}{n} \Var_{X \sim \mu} (|X|^2)\le 2D .
    \]
\end{enumerate}
\end{proposition}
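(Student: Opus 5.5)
The plan is to prove the three implications \ref{it:tsA}$\Rightarrow$\ref{it:tsB}$\Rightarrow$\ref{it:tsC}$\Rightarrow$\ref{it:tsD} separately. The last two are soft, and the first carries all the content.

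\textbf{\ref{it:tsB}$\Rightarrow$\ref{it:tsC}.} This is exactly the argument used for \ref{it:6}$\Rightarrow$\ref{it:5} in Proposition \ref{prop:equiv}. Since $A_t\preceq B_t$, the arithmetic--geometric mean inequality on the eigenvalues of $A_t$ gives
\[
\frac1n\tr(B_t)\ge\frac1n\tr(A_t)\ge(\det A_t)^{1/n}\ge e^{-Dt}
\qquad\forall t\in(0,t_0).
\]

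\textbf{\ref{it:tsC}$\Rightarrow$\ref{it:tsD}.} Recall that $m(t)=\tr(B_t)$ satisfies $m(0)=n$, and that \eqref{eq:mprime} gives $m'(0)=-\frac12\Var_\mu(|X|^2)$. Finiteness of fourth moments justifies differentiating under the integral at $t=0^+$ (dominated convergence, using $|x|^4e^{-t|x|^2/2}\le|x|^4$). Hypothesis \ref{it:tsC} says $m(t)\ge ne^{-Dt}$ on $(0,t_0)$, with equality at $t=0$. Hence
\[
m'(0)=\lim_{t\downarrow0}\frac{m(t)-n}{t}\ge\lim_{t\downarrow0}\frac{n(e^{-Dt}-1)}{t}=-Dn,
\]
which is precisely $\frac1n\Var_\mu(|X|^2)\le2D$.

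\textbf{\ref{it:tsA}$\Rightarrow$\ref{it:tsB}.} Since $\Phi(0)=\log\det\Id=0$, it suffices to show $\Phi'(t)\ge-Dn$ on $(0,t_0)$ and integrate. By \eqref{eq:phiprime},
\[
\Phi'(t)=-\tfrac12\Cov\bigl(|Y_t|^2,|X_t|^2\bigr).
\]
I would substitute $X_t=b_t+A_t^{1/2}Y_t$, so that
\[
|X_t|^2=|b_t|^2+2\langle A_t^{1/2}b_t,Y_t\rangle+\langle A_tY_t,Y_t\rangle,
\]
and try to bound each covariance term against $\Var(|Y_t|^2)\le2Dn$. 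The model case is the Gaussian, where $A_t=\frac1{1+t}\Id$: there the inequality is tight at $t=0$ and becomes an honest inequality afterwards. This suggests that the factor $A_t$ in the quadratic term is what should make the bound work.

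\textbf{Main obstacle.} The real difficulty is that $\Cov(|Y_t|^2,|X_t|^2)$ is not controlled by $\Var(|Y_t|^2)$ alone. A crude Cauchy--Schwarz bound gives
\[
-\Phi'(t)\le\tfrac12\sqrt{\Var(|Y_t|^2)\,\Var(|X_t|^2)}\le\tfrac12\sqrt{2Dn}\,\sqrt{-2m'(t)},
\]
and then AM--GM gives $-\Phi'\le\frac{\lambda}{2}Dn-\frac{1}{2\lambda}m'$. This couples the evolution of $\Phi$ to that of $m$, and a Gronwall-type argument on the pair $(\Phi,m)$ might then close. My first attempt, however, would be to find an exact identity or monotonicity, for instance comparing $\tr(A_t^{-1}A_t')$ with a derivative involving the normalized tilt $\nu_t$ or an operator bound on $A_t$. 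The goal is to obtain $\Phi'(t)\ge-\frac12\Var(|Y_t|^2)$ directly, since that reproduces the Gaussian equality case. I expect this step to be where the argument is genuinely decided.
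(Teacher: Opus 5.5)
Your implications (B)$\Rightarrow$(C) and (C)$\Rightarrow$(D) are correct and coincide with the paper's. The gap is in (A)$\Rightarrow$(B). You reach the right Cauchy--Schwarz bound $|\Phi'(t)|\le\sqrt{Dn\,(-m'(t))}$, but you never supply what closes it. Integrating that bound, either by Cauchy--Schwarz in time or by your AM--GM, only gives
\[
-\Phi(t)\le\sqrt{Dnt\,(n-m(t))}.
\]
Here $n-m(t)$ is a priori bounded only by $n$, so nothing follows, and a Gronwall argument needs a second relation between $\Phi$ and $m$ that you have not found. The missing idea is the reverse coupling. Apply $-\log\lambda\ge 1-\lambda$ to the eigenvalues of $A_t$ and use $A_t\preceq B_t$:
\[
-\Phi(t)\ \ge\ n-\tr A_t\ \ge\ n-m(t).
\]
Set $h(t):=n-m(t)$, which is $\ge 0$ because $m'\le 0$. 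Then $h(t)\le\sqrt{Dnt\,h(t)}$, so $h(t)\le Dnt$. Substituting back gives $-\Phi(t)\le\sqrt{Dnt\cdot Dnt}=Dnt$, which is (B). Your AM--GM version also closes once you have this inequality. With $\lambda=1$ you get $-\Phi(t)\le\frac12 Dnt+\frac12 h(t)$, and combined with $h(t)\le-\Phi(t)$ this yields $h(t)\le Dnt$ and then $-\Phi(t)\le Dnt$. This bootstrap is exactly the paper's argument. Note that it bounds $\Phi$ only in integrated form, never pointwise.

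Your preferred route, a pointwise bound $\Phi'(t)\ge-\frac12\Var(|Y_t|^2)$ (or even $\Phi'(t)\ge -Dn$), asks for more than hypothesis (A) can deliver. Write $X_t=b_t+A_t^{1/2}Y_t$. Then $\Cov(|Y_t|^2,|X_t|^2)$ splits into $\Cov(|Y_t|^2,\langle A_tY_t,Y_t\rangle)$ plus the linear term $2\Cov(|Y_t|^2,\langle A_t^{1/2}b_t,Y_t\rangle)$. Neither term is controlled by $\Var(|Y_t|^2)$ alone once $A_t$ is not a multiple of the identity or $b_t\neq0$. Bounding them requires information about $\Var\langle A_tY_t,Y_t\rangle$ and $\langle A_t^{1/2} b_t, Y_t \rangle$, which (A) does not provide. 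Moreover, the statement concerns arbitrary isotropic $\mu$ with finite fourth moments, so no symmetry or log-concavity is available to help. The Gaussian model case, where $A_t$ is scalar and $b_t=0$, hides exactly these terms. The integrated bootstrap above avoids the issue entirely.
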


\begin{proof}

Assume \ref{it:tsA}.  By Cauchy--Schwarz, \eqref{eq:mprime}, \eqref{eq:phiprime}, and \eqref{eq:isotropized-var},
\begin{equation}
    |\Phi'(t)|
    \le \frac12
    \sqrt{\Var (|Y_t|^2)\,
           \Var (|X_t|^2)} 
    \le \sqrt{Dn\,(-m'(t))}.
    \label{eq:phi-prime-bound}
\end{equation}
Since $m$ is non-increasing and $m(0)=n$, the arithmetic--geometric mean inequality gives $\det(A_t)^{1/n}\le \tr(A_t)/ n \leq  m(t)/n\le1$, and hence $\Phi(t)\le0$. Using Cauchy--Schwarz in the time variable, we obtain
\begin{equation}
    -\Phi(t)
    \le \int_0^t  -\Phi'(s)\,ds \le     \sqrt{Dn\,t\int_0^t(-m'(s))\,ds} 
    =\sqrt{Dn\,t\,(n-m(t))}.
    \label{eq:phi-integrated}
\end{equation}
On the other hand, applying $-\log x\ge 1-x$ to the eigenvalues of $A_t$ and summing yields
\begin{equation}\label{eq:phi-lower-trace}
    -\Phi(t)\ge n- \tr A_t \geq n - m(t).
\end{equation}
Writing $h(t)=n-m(t)\ge0$, \eqref{eq:phi-integrated} and \eqref{eq:phi-lower-trace} imply
\[
    h(t)\le \sqrt{Dn\,t\,h(t)},
\]
and hence
\begin{equation}\label{eq:trace-linear-D}
    n-m(t)\le Dnt.
\end{equation}
Substituting this back into \eqref{eq:phi-integrated} gives
\[
    -\Phi(t)\le Dnt,
\]
which is precisely \ref{it:tsB}.

Next, \ref{it:tsB} implies \ref{it:tsC} by the arithmetic--geometric mean inequality:
\[
    \frac1n\tr(B_t) \ge \frac1n\tr(A_t)
    \ge (\det A_t)^{1/n}
    \ge e^{-Dt}.
\]

Finally, assume \ref{it:tsC}.  Since $m(0)=n$,
\[
    \frac{m(t)-m(0)}{t}
    \ge n\,\frac{e^{-Dt}-1}{t}.
\]
Letting $t\downarrow0$ and using \eqref{eq:mprime} at $t=0$ gives
\[
    -\frac12\Var(|X|^2)=m'(0)\ge -Dn,
\]
which is \ref{it:tsD}.
\end{proof}

\begin{corollary}\label{cor:uniform-thin-shell-flow}
Fix $D > 0$. The following statements are equivalent:
\begin{enumerate}[label=\textup{(\Alph*)},leftmargin=2.6em]
\item \label{it:tscA} \textbf{Uniform Thin-Shell.} For all isotropic log-concave probability measures $\mu$ on $\R^n$,
\[
    \frac{1}{n} \Var_{X \sim \mu} (|X|^2)\le 2 D  .
\]  
\item \label{it:tscB} \textbf{Uniform covariance determinant control.}
For all isotropic log-concave probability measures $\mu$ on $\R^n$,
\[
    \frac1n\log\det A_t\ge -Dt \;\;\;\; \forall t >  0 . 
\]
\item \label{it:tscC} \textbf{Uniform second-moment trace control.}
For all isotropic log-concave probability measures  $\mu$ on $\R^n$,
\[
    \frac1n\tr(B_t)\ge e^{-Dt} \;\;\;\; \forall t >  0 . 
\]
\end{enumerate}
\end{corollary}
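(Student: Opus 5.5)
The plan is to apply Proposition \ref{prop:thin-shell-flow} to each isotropic log-concave $\mu$, with $t_0 = \infty$, i.e.\ on every interval $(0,t_0)$, and then let $t_0 \to \infty$.

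For \ref{it:tscA}$\Rightarrow$\ref{it:tscB}, fix an isotropic log-concave $\mu$ and $t > 0$. Its tilted measures $\mu_t$ are again log-concave, since we multiply by the log-concave function $e^{-t|x|^2/2}$. Hence $\nu_t$, the image of $\mu_t$ under the affine map $x \mapsto A_t^{-1/2}(x-b_t)$, is an isotropic log-concave probability measure on $\R^n$, as already noted in the text. Here $A_t$ is invertible, because $\mu_t$ has a density and so is not supported on a hyperplane. Log-concave measures have finite moments of all orders, so the finiteness assumptions hold.

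Applying the uniform hypothesis \ref{it:tscA} to $\nu_t$ gives $\frac1n \Var(|Y_t|^2) \le 2D$ for every $t > 0$. This is statement \ref{it:tsA} of Proposition \ref{prop:thin-shell-flow} for arbitrary $t_0$. The proposition then yields \ref{it:tsB}, namely $\frac1n \log\det A_t \ge -Dt$ for all $t \in (0,t_0)$. Since $t_0$ is arbitrary, this is \ref{it:tscB}.

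The implication \ref{it:tscB}$\Rightarrow$\ref{it:tscC} follows for each $\mu$ and $t$ from the AM--GM step in the proposition: $\frac1n\tr B_t \ge \frac1n\tr A_t \ge (\det A_t)^{1/n} \ge e^{-Dt}$.

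For \ref{it:tscC}$\Rightarrow$\ref{it:tscA}, the last step of the proposition's proof uses only the trace bound for small $t$ together with $m(0)=n$ and $m'(0) = -\frac12\Var(|X|^2)$. This gives $\frac1n\Var(|X|^2) \le 2D$ for each $\mu$.

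The only point needing care is the first implication. It requires that the hypothesis be applied to the isotropized tilts $\nu_t$ rather than to $\mu$ itself, which is exactly why uniformity over the whole class is needed. One must also check that differentiating under the integral at $t = 0$ is justified, i.e.\ that $m$ is differentiable from the right at $0$ with $m'(0) = -\frac12 \Var(|X|^2)$. This follows by dominated convergence, using the finite fourth moments of log-concave measures.
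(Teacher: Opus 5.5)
Your proposal is correct and follows the paper's proof. The key step is the same: apply the uniform hypothesis to the isotropized tilts $\nu_t$, obtain statement (A) of Proposition \ref{prop:thin-shell-flow} for every $t_0$, and read off the determinant and trace bounds; the converse comes from the derivative of $m$ at $t=0$. Arranging the argument as the cycle (A)$\Rightarrow$(B)$\Rightarrow$(C)$\Rightarrow$(A), instead of the paper's (A)$\Rightarrow$(B),(C) and (B) or (C)$\Rightarrow$(A), is only a cosmetic difference.
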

\begin{proof}
Statements \ref{it:tscB} or \ref{it:tscC} directly imply \ref{it:tscA} by Proposition~\ref{prop:thin-shell-flow} \ref{it:tsD}. Conversely, assume \ref{it:tscA}, and let $\mu$ be an isotropic log-concave measure on $\R^n$. 
For every $t>0$,  $Y_t := A_t^{-1/2} (X_t - b_t)$ is distributed according to the isotropic log-concave measure $\nu_t$. Applying \ref{it:tscA} to $\nu_t$ verifies
\[
    \frac{1}{n} \Var_{Y_t \sim \nu_t} (|Y_t|^2)\le 2D
    \qquad \forall\,t>0.
\]
Statements \ref{it:tscB} and \ref{it:tscC} now follow immediately by Proposition~\ref{prop:thin-shell-flow}. 
\end{proof}

\section{Thin-Shell implies Slicing} \label{sec:4}

Combining the results of the previous two sections, we can now immediately verify that Thin-Shell implies Slicing. This was first observed by Eldan and Klartag \cite{EldanKlartag} using a very different argument. 

Recall from the Introduction that
\[
L_n := \sup  L_\mu ~,~ \bar \sigma_n^2 :=   \sup \frac{1}{n} \Var_{X \sim \mu}(|X|^2) ,
\]
where in both cases the supremum is over all isotropic log-concave measures $\mu$ on $\R^n$. By testing for example the standard Gaussian measure, we have $L_n \geq \frac{1}{\sqrt{2 \pi}}$ and $\bar \sigma_n^2 \geq 2$. Simple estimates show that $L_n$ and $\bar \sigma_n$ are both finite for each $n \geq 1$. 
We say that uniform Slicing (Thin-Shell) holds if $L_n \leq C$ ($\bar \sigma_n^2 \leq C$) for some universal constant $C>0$ and all $n \geq 1$.

\begin{corollary}\label{cor:thin-shell-slicing} 
Uniform Thin-Shell implies uniform Slicing. Moreover, for every $n \geq 1$,
\begin{equation} \label{eq:Lnsigman}
L_n \leq C \bar \sigma_n 
\end{equation}
for some universal constant $C > 0$. 
\end{corollary}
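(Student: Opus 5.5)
Fix $n \geq 1$ and set $D := \bar\sigma_n^2/2$, so that by definition of $\bar\sigma_n$ the uniform Thin-Shell statement \ref{it:tscA} of Corollary~\ref{cor:uniform-thin-shell-flow} holds in dimension $n$ with this $D$. By that corollary, every isotropic log-concave $\mu$ on $\R^n$ then satisfies
\[
\tfrac1n\tr(B_t)\ge e^{-Dt} \quad \forall t>0 .
\]
This is the uniform trace control \ref{it:tscC}.

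\textbf{Partition function bound.} Integrating \eqref{eq:gprime} from $0$ to $t$ gives
\[
g(t) = -\tfrac12\int_0^t \tfrac1n\tr(B_s)\,ds \le -\frac{1-e^{-Dt}}{2D} .
\]
I will take $t_0 := 1/D$, which yields $g(t_0)\le -\frac{1-e^{-1}}{2D} = -c_Z t_0$ with $c_Z = \frac{1-e^{-1}}{2}$. This $c_Z$ is a universal constant, and only $t_0$ depends on $D$.

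\textbf{Small-ball estimate.} Next I apply \eqref{eq:smallball-from-Z}:
\[
\mu(|X|\le \varepsilon\sqrt n)\le \exp\{-\tfrac{1}{D}(c_Z-\varepsilon^2/2)n\} .
\]
Fix the universal radius $\varepsilon_* := \sqrt{c_Z}$, so that $c_Z - \varepsilon_*^2/2 = c_Z/2$. This gives property \ref{it:3} with
\[
C_{\rm isb}\varepsilon_* = q := e^{-c_Z/(2D)} < 1,
\]
and it holds uniformly over all isotropic log-concave measures on $\R^n$.

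\textbf{Slicing bound.} By Proposition~\ref{prop:equiv}, together with the Brazitikos argument and Remark~\ref{rem:explicit-dependence}, this yields
\[
L_n \le \frac{C}{\varepsilon_*\sqrt{1-q}} .
\]
Since $\bar\sigma_n^2\ge 2$, we have $D \ge 1$. Hence $c_Z/(2D)\le c_Z/2 <1$, and therefore
\[
1-q = 1-e^{-c_Z/(2D)} \ge c\,\frac{c_Z}{2D},
\]
using $1-e^{-x}\ge (1-e^{-1})x$ for $x\in[0,1]$. Thus $L_n \le C'\sqrt{D} = C''\bar\sigma_n$, which is \eqref{eq:Lnsigman}. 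Uniform Thin-Shell therefore implies uniform Slicing.

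\textbf{Main obstacle.} The delicate point is tracking the dependence on $D$ to get the linear bound $L_n\lesssim\bar\sigma_n$, and not merely some function of $\bar\sigma_n$. That dependence rests entirely on the explicit form of Remark~\ref{rem:explicit-dependence}, specifically the $(1-C_{\rm isb}\varepsilon_*)^{-1/2}$ factor. The plan therefore keeps $\varepsilon_*$ universal and pushes all the $D$-dependence into $q$.

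If that remark were unavailable, I would instead use \ref{it:6}-style reasoning directly. The entropy bound \eqref{eq:detentropy} is only a lower bound on $\det A_t$, so it does not give the reverse direction, and the Brazitikos route seems necessary. I therefore expect the correct bookkeeping of constants through that external result to be the step requiring care.
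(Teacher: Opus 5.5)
Your proof is correct and takes essentially the same route as the paper. You use the uniform trace control $\frac1n\tr(B_t)\ge e^{-Dt}$ from Corollary~\ref{cor:uniform-thin-shell-flow}, then the partition-function bound at $t_0=1/D$, then the single-radius estimate \eqref{eq:smallball-from-Z} with a universal $\varepsilon_*$ and $q=e^{-c/D}$, and finally Remark~\ref{rem:explicit-dependence} together with $D\ge 1$. The differences are only in constants: you integrate $e^{-Ds}$ to get $c_Z=(1-e^{-1})/2$ and take $\varepsilon_*=\sqrt{c_Z}$, whereas the paper uses monotonicity of $\tr(B_t)$ to get $c_Z=1/(2e)$ and takes $\varepsilon_*=1/2$.
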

\begin{proof}
By definition, statement \ref{it:tscA} of  Corollary~\ref{cor:uniform-thin-shell-flow} holds with $D = \bar \sigma_n^2 /2$. 
It follows by statement \ref{it:tscB} of Corollary~\ref{cor:uniform-thin-shell-flow} that for any $t > 0$ ,
\[
    \frac1n\log\det A_t\ge -D t ,
\]
which is statement \ref{it:6} of Proposition~\ref{prop:equiv}.  Alternatively, by statement \ref{it:tscC} of Corollary~\ref{cor:uniform-thin-shell-flow}, we also have for any $t > 0$,
\[
    \frac1n\tr(B_t)\ge e^{-D t},
\]
which is statement \ref{it:5} of Proposition~\ref{prop:equiv}.  Proposition~\ref{prop:equiv} therefore immediately shows that a universal upper bound on $\bar\sigma_n$ (and hence $D$) implies a universal upper bound on $L_n$. 

To get the linear dependence in (\ref{eq:Lnsigman}), let us choose $t = t_D = \frac{1}{D}$ above, so that $c_T = \frac{1}{e}$ in statement \ref{it:5} of Proposition~\ref{prop:equiv}.
Inspecting the quantitative relation between parameters used in the implications $\ref{it:5} \Rightarrow \ref{it:4} \Rightarrow \ref{it:3}$, we see that we may choose 
\[
c_{Z} = \frac{1}{2 e} ~,~ \varepsilon_* = \frac{1}{2} ~,~ q := C_{isb} \varepsilon_* = \exp(-t_D (c_{Z} - 1/8)) \leq \exp(-c /D).
\]
 It follows from Remark \ref{rem:explicit-dependence} (and $D \geq 1$) that 
 \[
 L_n \leq \frac{C}{\sqrt{1 - q}} \leq C' \sqrt{D} \leq C'' \bar \sigma_n,
 \]
  confirming (\ref{eq:Lnsigman}). 
\end{proof}

In fact, if we do not simply use $t = \frac{1}{D}$ above, but rather utilize the decay of $\tr(B_s)$ for all $s \in [0,t]$ and optimize over the choice of $t > 0$, we obtain the following very precise new small-ball estimate (note that there are no hidden universal constants in the formulation):

\begin{theorem}\label{thm:trace-small-ball}
Assume the following uniform thin-shell bound in $\R^n$:
\[
\bar \sigma^2_n \leq 2 D.
\] 
Then, for every $\eps \in(0,1)$,
\begin{equation}
\label{eq:lower-tail-from-trace}
    \mu\left(|X|\le \eps \sqrt n\right)
    \le
    \exp\left\{
        -\frac{n}{2D}
        \brac{
            1-\eps^2
            +\eps^2\log \eps^2
        }
    \right\} \\
\end{equation}
or equivalently, setting $s = (1-\eps) \sqrt{n}$, for all $s \in (0 , \sqrt{n})$,
\[
    \mu\left(|X|\le \sqrt{n} - s \right ) \le \exp \left \{  - \frac{s^2}{D} \left (1 + O\brac{\frac{s}{\sqrt{n}}} \right ) \right \} . 
\]
\end{theorem}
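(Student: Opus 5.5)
By Corollary~\ref{cor:uniform-thin-shell-flow}, statement \ref{it:tscC}, the uniform bound $\bar\sigma_n^2 \le 2D$ gives
\[
\tfrac1n \tr(B_s) \ge e^{-Ds} \qquad \text{for all } s>0 .
\]
Integrating \eqref{eq:gprime} from $0$ to $t$ then yields
\[
g(t) = -\frac12\int_0^t \frac1n\tr(B_s)\,ds \le -\frac12\int_0^t e^{-Ds}\,ds = -\frac{1-e^{-Dt}}{2D},
\]
that is,
\[
Z(t) \le \exp\Bigl\{-\frac{n}{2D}\bigl(1-e^{-Dt}\bigr)\Bigr\}.
\]

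Next I apply the Markov argument from the implication $\ref{it:4}\Rightarrow\ref{it:3}$, see \eqref{eq:smallball-from-Z}. On the event $\{|X|\le \eps\sqrt n\}$ we have $e^{-t|X|^2/2}\ge e^{-t\eps^2 n/2}$, so for every $t>0$,
\[
\mu(|X|\le \eps\sqrt n) \le e^{t\eps^2 n/2} Z(t) \le \exp\Bigl\{-\frac{n}{2D}\bigl(1-e^{-Dt} - D t\eps^2\bigr)\Bigr\}.
\]
Set $u = Dt$ and maximize $\psi(u) = 1-e^{-u}-u\eps^2$ over $u>0$. Since $\psi'(u)=e^{-u}-\eps^2$, the maximum is at $e^{-u}=\eps^2$, i.e.\ $u=-\log \eps^2>0$ because $\eps<1$. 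There
\[
\psi = 1-\eps^2+\eps^2\log\eps^2,
\]
which is exactly \eqref{eq:lower-tail-from-trace}.

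For the asymptotic form, put $\eps = 1-s/\sqrt n$ and expand $F(\eps)=1-\eps^2+\eps^2\log\eps^2$ around $\eps=1$. We have $F(1)=0$ and $F'(\eps) = 4\eps\log\eps$, so $F'(1)=0$. Also $F''(\eps)=4\log\eps+4$, so $F''(1)=4$. Hence
\[
F(\eps) = 2(1-\eps)^2\bigl(1+O(1-\eps)\bigr),
\]
and therefore
\[
\frac{n}{2D}F = \frac{s^2}{D}\Bigl(1+O\bigl(s/\sqrt n\bigr)\Bigr).
\]
To make the $O(\cdot)$ uniform on $s\in(0,\sqrt n)$, note that $F$ is positive and smooth on $(0,1)$ and tends to $1$ as $\eps\to0$. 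So $F(\eps)/(2(1-\eps)^2)$ is bounded above and below by positive constants on $(0,1)$, and is $1+O(1-\eps)$ near $1$. The two forms are also equivalent as stated, because the substitution $\eps\leftrightarrow s$ is a bijection.

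There is no serious obstacle. The only points needing care are that the bound of Corollary~\ref{cor:uniform-thin-shell-flow} holds for all $t>0$, which is what allows the optimal choice $t=-\log(\eps^2)/D$, and the routine Taylor bookkeeping for the uniform $O$-term.
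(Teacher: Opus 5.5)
Your proposal is correct and follows essentially the same route as the paper: the trace bound $\frac1n\tr(B_s)\ge e^{-Ds}$ from Corollary~\ref{cor:uniform-thin-shell-flow} is integrated into an upper bound on $Z(t)$, followed by the Markov-type bound $\mu(|X|\le\eps\sqrt n)\le e^{t\eps^2 n/2}Z(t)$, the optimal choice $t_*=\frac1D\log\frac1{\eps^2}$, and a Taylor expansion of the exponent near $\eps=1$. Your extra remark making the $O(s/\sqrt n)$ term uniform over all $s\in(0,\sqrt n)$ is a small addition that the paper leaves implicit.
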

\begin{proof}
Recall that $g(t):=\frac1n\log Z(t)$
satisfies
\[
    g'(t)=-\frac1{2n}\tr(B_t),
    \qquad
    g(0)=0.
\]
Hence, by Corollary \ref{cor:uniform-thin-shell-flow}, 
\[
    g(t) =-\frac12\int_0^t \frac1n\tr(B_s)\,ds \le -\frac12\int_0^t e^{-Ds}\,ds =-\frac{1-e^{-Dt}}{2D}.
\]
Consequently,
\[     Z(t)\le
    \exp\left\{
        -\frac{n}{2D}(1-e^{-Dt})
    \right\}.
\] 
Fix $\varepsilon\in(0,1)$. As usual, on the event
$\{|X|\le\varepsilon\sqrt n\}$, we have
\[
    e^{-t|X|^2/2}\ge e^{-t\varepsilon^2n/2},
\]
and therefore for all $t > 0$,
\begin{equation}\label{eq:small-ball-before-opt}
    \mu\bigl(|X|\le\varepsilon\sqrt n\bigr)
    \le e^{t\varepsilon^2n/2}Z(t)
    \le
    \exp\left\{
        \frac n2
        \left[
            t\varepsilon^2-\frac{1-e^{-Dt}}{D}
        \right]
    \right\}.
\end{equation}
It remains to optimize in $t>0$. Set
\[
    \phi(t)
    :=
    t\varepsilon^2-\frac{1-e^{-Dt}}{D}.
\]
Then
\[
    \phi'(t)=\varepsilon^2-e^{-Dt},
    \qquad
    \phi''(t)=De^{-Dt}>0,
\]
so the unique minimizer is given by
\[
    t_*=\frac1D\log\frac1{\varepsilon^2} > 0.
\]
Substituting $t_*$ into \eqref{eq:small-ball-before-opt} yields \eqref{eq:lower-tail-from-trace}. 

Finally, writing $\eps = 1 - \delta$ and setting
\[
    \Psi(\delta)
    :=
    1-(1-\delta)^2
    +(1-\delta)^2\log(1-\delta)^2,
\]
then
\[
    \Psi(\delta)
    =
    2\delta^2-\frac23\delta^3+O(\delta^4)
    \qquad (\delta\to0),
\]
which gives the stated asymptotic form of the exponent when $\delta = \frac{s}{\sqrt{n}}$. \end{proof}

Using the Chen--Klartag estimate $\bar \sigma_n^2 \leq 8$, Theorem \ref{thm:intro-main} immediately follows. 

\begin{remark} \label{rem:no-LC}
Note that log-concavity was actually never used in the proofs of Proposition \ref{prop:thin-shell-flow}, Corollary \ref{cor:uniform-thin-shell-flow} nor Theorem \ref{thm:trace-small-ball} -- all we need to obtain deviation estimates for $|X|$ below the expectation is a uniform control over the thin-shell variance parameter $\frac{1}{n} \Var_{X \sim \mu}(|X|^2)$ for a class of isotropic probability measures $\mu$ which is closed under applying a central Gaussian tilt and taking the isotropic affine image. 
\end{remark}

\section{The anisotropic case} \label{sec:5}

The above argument also extends to the anisotropic case under the natural assumption (\ref{eq:ani-var}) below. In order to obtain an exact analogue of Theorem \ref{thm:trace-small-ball} without incurring any loss, the computations turn out to be somewhat more intricate. 

\begin{theorem}\label{thm:anisotropic}
Assume that for every isotropic log-concave probability measure $\mu$ on $\R^n$ and symmetric operator $M : \R^n \rightarrow \R^n$,
\begin{equation}\label{eq:ani-var}
 \Var_{Y \sim \mu}(\scalar{MY,Y}) \le 2 D \tr(M^2) . 
\end{equation} 
Let $\nu$ be a  log-concave probability measure on $\R^n$, let $Z \sim \nu$, and denote $\M := \Cov(\nu)$. 
Then, for every $\varepsilon\in(0,1)$, 
\begin{equation}\label{eq:main}
 \nu \!\left(
 |Z - \E Z|\le \varepsilon\sqrt{\trace \M}
 \right)
 \le
 \exp\!\left\{
 -\frac{(\trace \M)^2}{2 D \,\trace(\M^2)}
 \left(1-\varepsilon^2+\varepsilon^2\log\varepsilon^2\right)
 \right\}.
\end{equation}
\end{theorem}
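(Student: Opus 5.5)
We may assume $\E Z=0$, since the statement is translation invariant. Write $Z=\M^{1/2}Y$ with $Y$ isotropic log-concave, set $T:=\trace\M$ and $S:=\trace(\M^2)$, and run the central Gaussian tilt directly on $\nu$:
\[
d\nu_t(z)=\frac{e^{-t|z|^2/2}}{Z(t)}\,d\nu(z),\qquad g(t):=\log Z(t),\qquad m(t):=\tr(B_t)=\E|Z_t|^2 .
\]
As in \eqref{eq:gprime} and \eqref{eq:mprime}, we have $g'=-\tfrac12 m$, $m(0)=T$ and $m'=-\tfrac12\Var(|Z_t|^2)$.

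The target is the trace lower bound
\[
m(t)\ge T e^{-\kappa t}\qquad\text{for all } t>0,\qquad \kappa:=\frac{DS}{T}.
\]
Given this bound, the proof of Theorem~\ref{thm:trace-small-ball} goes through verbatim. First, $g(t)\le -\tfrac{T}{2\kappa}(1-e^{-\kappa t})$. Next, Markov's inequality on $\{|Z|\le\eps\sqrt T\}$ gives
\[
\nu\bigl(|Z|\le \eps\sqrt T\bigr)\le\exp\Bigl\{\tfrac T2\bigl[t\eps^2-\kappa^{-1}(1-e^{-\kappa t})\bigr]\Bigr\}.
\]
Optimizing at $t_*=\kappa^{-1}\log\eps^{-2}$ yields exactly the exponent $-\tfrac{T^2}{2DS}(1-\eps^2+\eps^2\log\eps^2)$ in \eqref{eq:main}. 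As a consistency check, at $t=0$ the hypothesis \eqref{eq:ani-var} with $M=\M$ gives $m'(0)\ge -DS=-\kappa\, m(0)$, so $\kappa$ is the right rate.

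To get the trace bound for all $t$, I would mimic Proposition~\ref{prop:thin-shell-flow}, but with a weighted log-determinant in place of $\log\det A_t$. Let $A_t=\Cov(\nu_t)$, $b_t=\E Z_t$ and $Y_t=A_t^{-1/2}(Z_t-b_t)$, which is isotropic log-concave. Writing $P_t:=\M^{-1/2}A_t\M^{-1/2}$, the natural candidate functional is
\[
\Phi(t):=\tr\bigl(\M\log P_t\bigr).
\]
This reduces to $\log\det A_t$ when $\M=\Id$. Its required properties are the following.
\begin{enumerate}[label=(\roman*)]
\item Lower bound: since $-\log x\ge 1-x$ and the weight $\M$ is positive, $-\Phi(t)\ge T-\tr(A_t)\ge T-m(t)$.
\item Derivative: $\Phi'$ should be of the form $-\tfrac12\Cov(\langle W_t Y_t,Y_t\rangle,|Z_t|^2)$ for a symmetric $W_t$ with $\tr(W_t^2)\le S$.
\item Cauchy--Schwarz then gives $|\Phi'|\le\tfrac12\sqrt{2DS\cdot\Var(|Z_t|^2)}=\sqrt{DS(-m')}$, using \eqref{eq:ani-var} for the isotropic $Y_t$.
\item Integrating in time exactly as in \eqref{eq:phi-integrated} and \eqref{eq:trace-linear-D} gives $T-m(t)\le DSt$ and $-\Phi(t)\le DSt$.
\end{enumerate}
One would then also need a step converting control of $\Phi$ into the exponential bound $\tr(B_t)\ge Te^{-\kappa t}$. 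The isotropic case used AM--GM here; the weighted analogue is Jensen with weights $\M/T$, provided the spectral form is right.

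The main obstacle is making this "$\Phi$" step rigorous, because $\M$ and $A_t$ need not commute. The derivative of $\tr(\M\log P_t)$ then involves the Fréchet derivative of the matrix logarithm, not simply $\tr(\M P_t^{-1}P_t')$. I would first try to bypass the issue with the simpler differential inequality
\[
-m'(t)=\tfrac12\Var(|Z_t|^2)\le D\,\tr(A_t^2)+(\text{barycenter terms}).
\]
This follows by writing $|Z_t|^2=\langle A_tY_t,Y_t\rangle+2\langle A_t^{1/2}b_t,Y_t\rangle+|b_t|^2$ and applying \eqref{eq:ani-var}. One would then need to show that $\tr(A_t^2)/\tr(A_t)$ is non-increasing in $t$, which is plausible since the Gaussian tilt contracts large directions most. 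If that monotonicity fails in general, I would return to the weighted-entropy functional and control the non-commutative derivative through the integral representation $\log P=\int_0^\infty\bigl[(1+s)^{-1}-(s+P)^{-1}\bigr]\,ds$.
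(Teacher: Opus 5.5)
Your architecture is the paper's. Your tilt $e^{-t|z|^2/2}$ of $\nu$ is the paper's anisotropic tilt $e^{-\frac t2\scalar{\M x,x}}$ of $\mu=(\M^{-1/2})_*\nu$. Your $P_t$ is the paper's $A_t$ (the covariance of $X_t=\M^{-1/2}Z_t$), and your $\Phi=\tr(\M\log P_t)$ is exactly the paper's functional. Steps (i), (iii), (iv), the weighted Jensen step and the final optimization all match. The Jensen step is fine as it stands, since in an eigenbasis of $P_t$ the diagonal of $\M/T$ is a probability vector.

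The gap is step (ii). It is the only new ingredient of the anisotropic case, and you only assert it ("should be of the form"). Two facts are needed there.

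\textbf{Self-adjointness.} Write $\mathcal{L}_P[Q]:=\frac{d}{ds}\big|_{s=0}\log(P+sQ)$. Since $\M$ and $P_t$ do not commute, $\Phi'(t)=\tr\bigl(\M\,\mathcal{L}_{P_t}[P_t']\bigr)$. You must move $\mathcal{L}$ off $P_t'$ and onto $\M$, using its self-adjointness for the trace pairing (Corollary~\ref{cor:log}). This gives $\Phi'(t)=\tr(P_t'K_t)$ with $K_t:=\mathcal{L}_{P_t}[\M]$. Inserting $P_t'=-\frac12\Cov\bigl((X_t-\E X_t)\otimes(X_t-\E X_t),\scalar{\M X_t,X_t}\bigr)$ and $\scalar{\M X_t,X_t}=|Z_t|^2$ yields your form with $W_t=P_t^{1/2}K_tP_t^{1/2}$. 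This $W_t$ acts on $P_t^{-1/2}(X_t-\E X_t)$, an orthogonal image of your $Y_t$, which does not affect $\tr(W_t^2)$.

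\textbf{Hilbert--Schmidt contraction.} This is the crux: you need $\tr(W_t^2)\le\tr(\M^2)$. By Lemma~\ref{lem:log}, in an eigenbasis $P_t=\diag(\lambda_i)$,
$(W_t)_{ij}=\M_{ij}\sqrt{\lambda_i\lambda_j}\,\frac{\log\lambda_i-\log\lambda_j}{\lambda_i-\lambda_j}=\M_{ij}\,\frac{u_{ij}}{\sinh u_{ij}}$, where $u_{ij}=\frac12\log(\lambda_i/\lambda_j)$. Since $0<u/\sinh u\le 1$, this gives $|(W_t)_{ij}|\le|\M_{ij}|$ entrywise.

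Your integral representation would in fact deliver both facts. The formula $\mathcal{L}_P[Q]=\int_0^\infty(s+P)^{-1}Q(s+P)^{-1}\,ds$ is visibly trace-self-adjoint. Cauchy--Schwarz in $s$ gives $\int_0^\infty\frac{ds}{(s+\lambda_i)(s+\lambda_j)}\le(\lambda_i\lambda_j)^{-1/2}$. However, the proposal neither carries this out nor identifies the entrywise contraction as the estimate that makes the constant $D\,\tr(\M^2)$ come out without loss.

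The bypass you say you would try first does not close the gap. For non-symmetric $\nu$, the barycenter contributions to $\frac12\Var(|Z_t|^2)$ are $2\scalar{A_tb_t,b_t}$ and $2\Cov\bigl(\scalar{A_tY_t,Y_t},\scalar{A_t^{1/2}b_t,Y_t}\bigr)$. The latter is only linear in $|b_t|$ and of indefinite sign. Neither has any reason to fit under the available slack $\kappa|b_t|^2$ in $\kappa\,m(t)=\kappa(\tr A_t+|b_t|^2)$. Moreover, the monotonicity of $\tr(A_t^2)/\tr(A_t)$ is not established for general log-concave $\nu$.

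Even in the isotropic case, the paper never proves a pointwise Gr\"onwall inequality $-m'\le Dm$. The point of passing through $\Phi$ with Cauchy--Schwarz in time is precisely to avoid one. So the route to carry through is the weighted-$\Phi$ one, completed by the two facts above.
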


It was observed by Letwin \cite{Letwin2026} (see also \cite{Bizeul2026}) that the Chen--Klartag argument which yields the sharp variance bound on $|Y|^2$ for general isotropic log-concave probability measures, equally applies to general quadratic functions $\scalar{M Y,Y}$ in the form (\ref{eq:ani-var}), with the same constant $D = 4$ as for $M = \Id$. Consequently, Theorem \ref{thm:anisotropic} immediately yields Theorem \ref{thm:intro-main2}. 

\medskip

For the proof of Theorem \ref{thm:anisotropic}, we will need the following formula for differentiation of the logarithm of a positive-definite matrix (see e.g.~\cite[Theorem V.3.3]{Bhatia1997} for a general treatment):

\begin{lemma} \label{lem:log}
Let $A$ denote a positive-definite $n \times n$ matrix, and choose an orthonormal basis in which
$$
A=\diag(\lambda_1,\ldots,\lambda_n).
$$
Then for any  $n \times n$ symmetric matrix $Q$,
$$
\left(
\left.\frac{d}{ds}\right|_{s=0}\log(A+sQ)
\right)_{ij}
=
Q_{ij}\,\Lambda_{ij},
$$
where
$$
\Lambda_{ij}
:=
\begin{cases}
	\displaystyle
	\frac{\log\lambda_i-\log\lambda_j}{\lambda_i-\lambda_j}
	& \lambda_i\neq\lambda_j,\\[1.2ex]
	\displaystyle
	\frac{1}{\lambda_i}
	& \lambda_i=\lambda_j.
\end{cases}
$$
\end{lemma}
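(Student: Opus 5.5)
The plan is to reduce to the case of distinct eigenvalues and compute the derivative via the Cauchy integral (or resolvent) representation of the matrix logarithm. For a positive-definite $A$ and sufficiently small $|s|$, $A+sQ$ remains positive-definite, and I will use
\[
\log(A+sQ) = \int_0^\infty \left( \frac{1}{1+u}\Id - (A+sQ+u\Id)^{-1} \right) du ,
\]
which follows from the scalar identity $\log\lambda = \int_0^\infty \left(\frac{1}{1+u} - \frac{1}{\lambda+u}\right) du$ applied spectrally. Differentiating under the integral sign at $s=0$ (justified by uniform bounds $\|(A+sQ+u\Id)^{-1}\| \le (\lambda_{\min}/2+u)^{-1}$ for small $s$, giving integrable $O((1+u)^{-2})$ bounds on the derivative), and using $\frac{d}{ds}(A+sQ+u\Id)^{-1} = -(A+u\Id)^{-1}Q(A+u\Id)^{-1}$ at $s=0$, yields
\[
\left.\frac{d}{ds}\right|_{s=0}\log(A+sQ) = \int_0^\infty (A+u\Id)^{-1} Q (A+u\Id)^{-1}\, du .
\]

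In the eigenbasis of $A$, the $(i,j)$ entry of the integrand is $Q_{ij}\,(\lambda_i+u)^{-1}(\lambda_j+u)^{-1}$. So it remains to compute the scalar integral $\int_0^\infty \frac{du}{(\lambda_i+u)(\lambda_j+u)}$. When $\lambda_i\ne\lambda_j$, partial fractions give
\[
\frac{1}{\lambda_j-\lambda_i}\left[\log\frac{\lambda_j+u}{\lambda_i+u}\right]_0^\infty ,
\]
which equals $\frac{\log\lambda_i-\log\lambda_j}{\lambda_i-\lambda_j}$. When $\lambda_i=\lambda_j$, the integral is $\int_0^\infty (\lambda_i+u)^{-2}du = 1/\lambda_i$. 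This is exactly $\Lambda_{ij}$, as claimed.

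The only delicate point is justifying the integral representation and the interchange of derivative and integral; both follow from standard dominated-convergence arguments. No eigenvalue perturbation theory is needed, so repeated eigenvalues cause no difficulty. Alternatively, one can simply cite the Daleckii–Krein formula from \cite[Theorem V.3.3]{Bhatia1997}, of which this lemma is the special case $f=\log$.
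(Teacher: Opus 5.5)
Your proof is correct, and it takes a genuinely different, more self-contained route than the paper. The paper gives no proof of Lemma \ref{lem:log}. It simply invokes the Daleckii--Krein formula (Bhatia, Theorem V.3.3): for any $C^1$ function $f$, the derivative of $f(A+sQ)$ at $s=0$ is the Schur product of $Q$ with the matrix of first divided differences of $f$ at the eigenvalues of $A$. The standard proof of that formula checks polynomials by hand and then uses an approximation argument.

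You instead use the resolvent representation $\log\lambda=\int_0^\infty\bigl(\frac{1}{1+u}-\frac{1}{\lambda+u}\bigr)\,du$. This is special to $\log$, or more generally to functions with such a Stieltjes-type representation. In exchange, it turns the derivative into the single explicit integral $\int_0^\infty (A+u\Id)^{-1}Q(A+u\Id)^{-1}\,du$. The same scalar integral then produces both the divided difference and its diagonal limit. Your domination bound $\|Q\|(\lambda_{\min}/2+u)^{-2}$, valid for $|s|\,\|Q\|\le\lambda_{\min}/2$, does justify differentiating under the integral.

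The citation buys generality and brevity. Your formula buys more for this particular paper:
\begin{enumerate}
\item It gives Corollary \ref{cor:log} without choosing a basis, since $\tr\bigl(P(A+u\Id)^{-1}Q(A+u\Id)^{-1}\bigr)$ is symmetric in $P,Q$ by cyclicity of the trace.
\item Cauchy--Schwarz in $u$ gives
\[
\Lambda_{ij}=\int_0^\infty\frac{du}{(\lambda_i+u)(\lambda_j+u)}\le\Bigl(\int_0^\infty\frac{du}{(\lambda_i+u)^2}\Bigr)^{1/2}\Bigl(\int_0^\infty\frac{du}{(\lambda_j+u)^2}\Bigr)^{1/2}=\frac{1}{\sqrt{\lambda_i\lambda_j}},
\]
which is exactly the inequality $|u_{ij}/\sinh u_{ij}|\le 1$ used in the proof of Theorem \ref{thm:anisotropic}.
\end{enumerate}

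Two small slips to fix:
\begin{enumerate}
\item For $\lambda_i\neq\lambda_j$ the antiderivative is $\frac{1}{\lambda_j-\lambda_i}\log\frac{\lambda_i+u}{\lambda_j+u}$. With the ratio inverted as you wrote it, the bracket evaluates to the negative of the value you then correctly state. That sign cannot be right, since the integrand is positive.
\item Your opening sentence about reducing to distinct eigenvalues does not describe what you actually do, and no such reduction is needed.
\end{enumerate}
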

\begin{remark}
Note that the second line is precisely the continuous extension of the first one
to the diagonal $\lambda_i=\lambda_j$.
Also note that this formula is independent of the particular orthonormal
basis chosen inside an eigenspace of $A$, 
so there is no ambiguity in the definition of $\Lambda$. 
\end{remark}

\begin{corollary} \label{cor:log}
For any two $n \times n$ symmetric matrices $P,Q$,
$$
\trace\left(
P\left.\frac{d}{ds}\right|_{s=0}\log(A+sQ)
\right)
=
\trace\left(
Q\left.\frac{d}{ds}\right|_{s=0}\log(A+sP)
\right).
$$
\end{corollary}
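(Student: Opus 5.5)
Corollary \ref{cor:log} follows directly from Lemma \ref{lem:log}, by writing both traces in the eigenbasis of $A$ and using the symmetry of the matrix $\Lambda$.

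Fix an orthonormal basis in which $A=\diag(\lambda_1,\ldots,\lambda_n)$. Then $P$ and $Q$ are symmetric matrices in this basis, with entries $P_{ij}=P_{ji}$ and $Q_{ij}=Q_{ji}$. By Lemma \ref{lem:log}, the derivative $\left.\frac{d}{ds}\right|_{s=0}\log(A+sQ)$ is the matrix with entries $Q_{ij}\Lambda_{ij}$. Hence
\[
\trace\left(P\left.\frac{d}{ds}\right|_{s=0}\log(A+sQ)\right)=\sum_{i,j}P_{ji}\,Q_{ij}\,\Lambda_{ij}=\sum_{i,j}P_{ij}\,Q_{ij}\,\Lambda_{ij},
\]
where the second equality uses $P_{ji}=P_{ij}$.

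Applying the same computation with the roles of $P$ and $Q$ exchanged gives
\[
\trace\left(Q\left.\frac{d}{ds}\right|_{s=0}\log(A+sP)\right)=\sum_{i,j}Q_{ij}\,P_{ij}\,\Lambda_{ij}.
\]
This is the same sum, which proves the identity.

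The only points to check are the following.
\begin{itemize}
\item Both traces must be computed in the same basis. This is legitimate because the trace is invariant under orthogonal change of basis.
\item The formula in Lemma \ref{lem:log} does not depend on the choice of eigenbasis, as the Remark after it notes, so a single fixed eigenbasis of $A$ serves both computations.
\item $\Lambda$ is symmetric, $\Lambda_{ij}=\Lambda_{ji}$. This is clear from its definition: the divided difference $\frac{\log\lambda_i-\log\lambda_j}{\lambda_i-\lambda_j}$ is unchanged when $i$ and $j$ are swapped, and the diagonal case is symmetric trivially. In fact the final sums above only need the symmetry of $P$ and $Q$, so this property is not even essential.
\end{itemize}
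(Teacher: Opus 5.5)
Your proof is correct and follows the paper's argument: in an eigenbasis of $A$, Lemma \ref{lem:log} reduces both sides to $\sum_{i,j}P_{ij}Q_{ij}\Lambda_{ij}$. Your side remark is also right: the symmetry of $P$ and $Q$ alone gives this reduction, whereas the paper cites the symmetry of $\Lambda$.
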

\begin{proof}
Since $\Lambda$ is symmetric, in the above eigenbasis both sides are equal to $\sum_{i,j} P_{ij} Q_{ij}\Lambda_{ij}$. 
\end{proof}

\begin{proof}[Proof of Theorem \ref{thm:anisotropic}]
Without loss of generality we may assume that $\E Z = 0$ and that $\M$ is positive-definite, otherwise simply translate $\nu$ and project it onto the image of $\M$. 

Let $\mu := (\M^{-1/2})_* \nu$ denote the distribution of $X := \M^{-1/2} Z$, so that $\mu$ is isotropic and log-concave, and $|Z|^2=\scalar{\M X,X}$. 
Introduce the anisotropic Gaussian tilts for $t \geq 0$:
\[
 X_t \sim \mu_t ~,~ d\mu_t(x) :=\frac{1}{Z(t)}e^{-\frac t2\scalar{\M x,x}}\, d\mu(x) ~,~ Z(t) := \int_{\R^n} e^{-\frac t2 \scalar{\M x,x}}\,d\mu(x) .
 \]
 Denote as usual
 \[
 B_t := \E X_t \otimes X_t ~,~ A_t:=\covariance(\mu_t) ~,~ b_t := \E X_t .
 \]
 The anisotropic analogues of the second-moment trace and covariance determinant are given by
 \[
 m(t):=\trace(\M B_t) ~,~ \Phi(t):=\trace\big(\M\log A_t\big).
 \]
 As usual, differentiation yields
\begin{align} 
\label{eq:ani-Aprime}
 A_t' & = -\frac12\,\Cov \brac{(X_t - b_t)\otimes (X_t-b_t), \scalar{\M X_t,X_t}}  ,\\
\label{eq:ani-mprime}
 B_t' & = -\frac12\,\Cov \brac{X_t\otimes X_t, \scalar{\M X_t,X_t}} ~,~ m'(t)=-\frac12 \Var(\scalar{ \M X_t,X_t}) . 
 \end{align}
As for the derivative of $\Phi(t)$, let
\[
K_t := \left.\frac{d}{ds}\right|_{s=0}\log(A_t+s \M).
\]
It follows by Corollary \ref{cor:log} and \eqref{eq:ani-Aprime} that
\begin{align}
\nonumber \Phi'(t) & = \trace\left( \M\left.\frac{d}{ds}\right|_{s=0} \log(A_t+sA_t') \right) = \trace(A_t' K_t) \\
\nonumber	& =
	-\frac12\,
	\covariance\!\left(
	\langle K_t (X_t-b_t),(X_t-b_t)\rangle,
	\langle \M X_t,X_t\rangle
	\right) \\
\label{eq:ani-Phi-derivative} 	& = -\frac12\,
	\covariance\!\left(
	\langle M_t Y_t,Y_t\rangle,
	\langle \M X_t,X_t\rangle
	\right) ,
\end{align}
where
\[
Y_t :=A_t^{-1/2} (X_t -b_t) ~,~
M_t :=A_t^{1/2}K_t A_t^{1/2}.
\]

Now fix $t \geq 0$ and choose an orthonormal basis in which
\[
A_t=\diag(\lambda_1,\ldots,\lambda_n).
\]
By Lemma \ref{lem:log},
\[
(K_t)_{ij}=\M_{ij}\Lambda_{ij},
\]
and hence
\[
(M_t)_{ij} = \M_{ij}\sqrt{\lambda_i\lambda_j}\,\Lambda_{ij}.
\]
Denote
$$
u_{ij}:=\frac12\log\frac{\lambda_i}{\lambda_j},
$$
and observe that
\[
\sqrt{\lambda_i\lambda_j}\,\Lambda_{ij} =  \begin{cases} \displaystyle \frac{\log(\lambda_i/\lambda_j)}{\sqrt{\lambda_i/\lambda_j} - \sqrt{\lambda_j/\lambda_i}} = \frac{u_{ij}}{\sinh u_{ij}} & \lambda_i \neq \lambda_j \\ & \\ \displaystyle 1 & \lambda_i = \lambda_j \end{cases} .
\]
Since
$$
\left|\frac{u}{\sinh u}\right|\le 1,
$$
we conclude that
\begin{equation}\label{eq:Bt-HS}
	\trace(M_t^2)
	=
	\sum_{i,j}|(M_t)_{ij}|^2
	\le
	\sum_{i,j}|\M_{ij}|^2
	=
	\trace(\M^2) .
\end{equation}
Note that this ingredient was automatically fulfilled in the isotropic case when $\M = \Id$, because $K_t = A_t^{-1}$ and hence $M_t = \Id$. 

We can now finally repeat the proof from the isotropic case. Denote $S_1 :=\tr(\M)$ and $S_2:=\tr(\M^2)$.
Since $Y_t$ is isotropic, applying \eqref{eq:ani-var} and \eqref{eq:Bt-HS} to $Y_t$ and $M_t$ yields
\[
\Var(\scalar{M_t Y_t,Y_t}) \leq 2 D \tr(M_t^2)  \leq 2 D S_2 . 
\]
Recalling \eqref{eq:ani-mprime} and \eqref{eq:ani-Phi-derivative} and applying Cauchy--Schwarz, we obtain:
\begin{equation}\label{eq:ani-phiprime}
	|\Phi'(t)|\le \frac{1}{2} \sqrt{ \Var(\scalar{M_t Y_t,Y_t}) \Var(\scalar{\M X_t,X_t})} 
	\le \sqrt{D S_2 \,(-m'(t))}.
\end{equation}
On the other hand, since $-\log A_t\succeq I-A_t \succeq I - B_t$ in the positive-definite order, and since $P,Q \succeq 0$ implies that $\tr(P Q) \geq 0$,  tracing with $\M$ yields
\[  -\Phi(t)\ge S_1 -m(t).
\] Hence, integrating \eqref{eq:ani-phiprime}, applying Cauchy--Schwarz in the time variable, and using that $A_0 = B_0 = \Id$ and hence $\Phi(0) = 0$ and $m(0) = S_1$, we obtain
$$
S_1-m(t)
 \le -\Phi(t) = \int_0^t -\Phi'(s) ds 
 \le \sqrt{D S_2 t\,(S_1 -m(t))},
$$
and therefore
\begin{equation}\label{eq:h}
 S_1-m(t)\le D S_2 t ~,~ \Phi(t) \geq - D S_2 t . 
\end{equation}
To upgrade the linear lower bound on $m(t)$ to an exponential one via $\Phi(t)$, we use the following weighted arithmetic-geometric mean variant. Indeed, since $\M^{(1)}:=\frac{1}{S_1} \M$ is positive-definite and $\tr(\M^{(1)})=1$, using the diagonalizing basis for $A_t$ and applying Jensen's inequality, we have
\begin{align*}
& \log\frac{m(t)}{S_1} =  \log \tr(\M^{(1)} B_t) \ge
 \log \tr(\M^{(1)} A_t) \\ 
 & = \log \sum_{i=1}^n \M^{(1)}_{ii} \lambda_i  \ge \sum_{i=1}^n \M^{(1)}_{ii} \log \lambda_i = \tr( \M^{(1)} \log A_t)  = \frac{\Phi(t)}{S_1} .
\end{align*}
Together with \eqref{eq:h}, this gives the anisotropic trace exponential lower bound
\begin{equation}\label{eq:tracebound}
 m(t)\ge S_1 \exp\!\left(- D \frac{S_2}{S_1} t\right).
\end{equation}

Finally,
$$
 (\log Z)'(t)=-\frac12m(t),
$$
so integrating \eqref{eq:tracebound} yields
\begin{equation}\label{eq:partition}
 \log Z(t)
 \le
 -\frac{S^2_1}{2 D S_2}
 \left(1-e^{-D \frac{S_2}{S_1} t}\right).
\end{equation}
On the event $\{\scalar{\M X,X} \le \varepsilon^2 S_1\}$,
$$
 e^{-\frac t2 \scalar{\M X,X}} \ge e^{- \frac t2 \varepsilon^2 S_1},
$$
and so by \eqref{eq:partition}, for all 
$u:=\frac{D S_2}{S_1}t > 0$,
$$
 \mu \big(\langle \M X,X\rangle\le \varepsilon^2 S_1 \big)
 \le  e^{\frac t2 \varepsilon^2 S_1} Z(t) \le
 \exp\!\left\{
 \frac{S_1^2}{2 D S_2}\brac{
 \varepsilon^2u-(1-e^{-u})}\right\} . 
$$
Optimizing on $u > 0$, the right-hand side is minimal for $e^{-u}=\varepsilon^2$, i.e.
$u=\log(1/\varepsilon^2)$, yielding
$$
  \mu \big(\langle \M X,X\rangle\le \varepsilon^2 S_1\big)
 \le
 \exp\!\left\{
 -\frac{S_1^2}{2 D S_2}
 \left(1-\varepsilon^2+\varepsilon^2\log\varepsilon^2\right)
 \right\}.
$$
Since $\langle \M X,X\rangle=|Z|^2$, this is exactly \eqref{eq:main}.
\end{proof}

\end{document}